\crefname{hypothesis}{Hypothesis}{Hypotheses}
\title{The Computation of Approximate Feedback Stackelberg Equilibria in Multi-player Nonlinear Constrained Dynamic Games\thanks{
This is a preprint manuscript. 
\funding{This work was supported by the DARPA Assured Autonomy and ANSR programs, the
NASA ULI program in Safe Aviation Autonomy, and the ONR Basic Research Challenge in Multibody Control Systems. D. Fridovich-Keil was supported by the National Science Foundation under Grant No. 2211548.}}}
\author{
Jingqi Li\thanks{Electrical Engineering and Computer Sciences, University of California, Berkeley, Berkeley, CA
94720 USA (\email{jingqili@berkeley.edu}, (765) 337-0678, \email{sojoudi@berkeley.edu}, (510) 643-6610, \email{tomlin@berkeley.edu}, (626) 395-3813)} \and
Somayeh Sojoudi\footnotemark[2] \and
Claire Tomlin\footnotemark[2]\and 
David Fridovich-Keil\thanks{Aerospace Engineering and Engineering Mechanics, University of Texas at Austin, Austin, TX
78712-1221 USA (\email{dfk@utexas.edu}, (512) 471-4257)}
}
\newcommand{\zTi}{{\mathbf{z}_{T}^{i*}}}
\newcommand{\zTrhoistar}{{\mathbf{z}_{T,\rho}^{i*}}}
\newcommand{\zTNstar}{{\mathbf{z}_{T}^{N*}}}
\newcommand{\zTrhoNstar}{{\mathbf{z}_{T,\rho}^{N*}}}
\newcommand{\compactdomainforgradientanalysis}{{\mathcal{S}}}
\newcommand{\linearoperator}{{\hat{\mathcal{M}}}}
\newcommand{\matrixinverseoperator}{{\mathcal{M}}}
\newcommand{\operator}{{\mathcal{A}}}
\newcommand{\nablastar}{{\nabla^*}}
\newcommand{\alphaupper}{{\hat{\alpha}}}
\newcommand{\FSE}{FSE}
\newcommand{\quasi}{quasi-policy}
\newcommand{\quasipolicies}{quasi-policies}
\newcommand{\quasigradients}{quasi-policy gradients}
\newcommand{\quasigradient}{quasi-policy gradient}
\newcommand{\xvec}{{\mathbf x}}
\newcommand{\uvec}{{\mathbf u}}
\newcommand{\gammavec}{{\boldsymbol \gamma}}
\newcommand{\muvec}{{\boldsymbol \mu}}
\newcommand{\lambdavec}{{\boldsymbol \lambda}}
\newcommand{\svec}{{\mathbf s}}
\newcommand{\psivec}{{\boldsymbol \psi}}
\newcommand{\etavec}{{\boldsymbol \eta}}
\newcommand{\zvec}{{\mathbf z}} 
\newcommand{\zvectilde}{\tilde{\mathbf{z}}}
\newcommand{\zvecrho}{\mathbf{z}_{\rho}}
\newcommand{\zvecrhostar}{{\mathbf{z}_\rho^*}}
\newcommand{\yvec}{{\mathbf y}} 
\newcommand{\F}{{\mathcal F}}
\newcommand{\pitrhoi}{{\pi_{t,\rho}^i}}
\newcommand{\piTrhoi}{{\pi_{T,\rho}^i}}
\newcommand{\piTrhoN}{{\pi_{T,\rho}^N}}
\newcommand{\piTrhoinext}{{\pi_{T,\rho}^{i+1}}}
\newcommand{\Ktirhostar}{K_{t,\rho}^{i*}}
\newcommand{\localpitrhoi}{{\tilde{\pi}_{t,\rho}^i}}
\newcommand{\localpiTrhoi}{{\tilde{\pi}_{T,\rho}^i}}
\newcommand{\localpiTrhoN}{{\tilde{\pi}_{T,\rho}^N}}
\newcommand{\localpitrhoN}{{\tilde{\pi}_{t,\rho}^N}}
\newcommand{\localpiTrho}{\tilde{\pi}_{T,\rho}}
\newcommand{\KTNrho}{K_{T,\rho}^N}
\newcommand{\KTirho}{K_{T,\rho}^i}
\newcommand{\KtNrho}{K_{t,\rho}^N}
\newcommand{\Ktirho}{K_{t,\rho}^i}
\newcommand{\KTnextirho}{K_{T,\rho}^{i+1}}
\newcommand{\hatKTirho}{\hat{K}_{T,\rho}^i}
\newcommand{\Knexttonerho}{K_{t+1,\rho}^1}
\newcommand{\Krho}{K_\rho}
\newcommand{\Ktnextirho}{K_{t,\rho}^{i+1}}
\newcommand{\Kzeroonerho}{K_{0,\rho}^1}
\newcommand{\knownxt}{\bar{x}_t}
\newcommand{\knownutoneiminusone}{\bar{u}_t^{1:i-1}}
\newcommand{\piti}{\pi_t^{i}}
\newcommand{\pitj}{\pi_t^{j}}
\newcommand{\pitauj}{\pi_\tau^{j}}
\begin{document}

\maketitle

\begin{abstract}
Solving feedback Stackelberg games with nonlinear dynamics and coupled constraints, a common scenario in practice, presents significant challenges. This work introduces an efficient method for computing approximate local feedback Stackelberg equilibria in multi-player general-sum dynamic games, with continuous state and action spaces. Different from existing (approximate) dynamic programming solutions that are primarily designed for unconstrained problems, our approach involves reformulating a feedback Stackelberg dynamic game into a sequence of nested optimization problems, enabling the derivation of Karush–Kuhn–Tucker (KKT) conditions and the establishment of a second-order sufficient condition for local feedback Stackelberg equilibria. We propose a Newton-style primal-dual interior point method for solving constrained linear quadratic (LQ) feedback Stackelberg games, offering provable convergence guarantees. Our method is further extended to compute local feedback Stackelberg equilibria for more general nonlinear games by iteratively approximating them using LQ games, ensuring that their KKT conditions are locally aligned with those of the original nonlinear games. 
We prove the exponential convergence of our algorithm in constrained nonlinear games. 
In a feedback Stackelberg game with nonlinear dynamics and (nonconvex) coupled costs and constraints, our experimental results reveal the algorithm's ability to handle infeasible initial conditions and achieve exponential convergence towards an approximate local feedback Stackelberg equilibrium.
\end{abstract}

\begin{keywords}
Feedback Stackelberg equilibrium, dynamic games, mathematical programming
\end{keywords}

\begin{MSCcodes}
49K99, 68Q25, 91A25 
\end{MSCcodes}

\section{Introduction}





Dynamic game theory \cite{basar1999dynamic} provides tools for analyzing strategic interactions in multi-agent systems
. It has broad applications in control \cite{chen1998game}, biology \cite{lavalle1993game}, and economics \cite{he2007survey}. 
A well-known equilibrium concept in dynamic game theory is the \emph{Nash equilibrium} \cite{nash1951non}, where players 
pursue strategies that are unilaterally optimal, and 
players make decisions simultaneously. However, this may not apply to a broad class of games where a decision hierarchy exists, such as lane-merging in highway driving \cite{yoo2013stackelberg}, predator-prey competition in biology \cite{bakule1987structural}, and retail markets in economics \cite{li2017review}. These games could be more naturally formulated as \emph{Stackelberg games} \cite{von1952theory}, 
where players act sequentially in a predefined order. For such games, the \emph{Stackelberg equilibrium}  is the appropriate equilibrium concept.

The formulation of Stackelberg equilibria depends on the information structure \cite{basar1999dynamic}. For instance, in scenarios where players lack access to the current game state, one can compute an \emph{open-loop Stackelberg equilibrium} (OLSE). At such an equilibrium, players' decisions depend on the initial state of a game and followers’ decisions are influenced by the leaders'. 
When players also have access to state information and their prior players' actions, it becomes appropriate to compute a \emph{feedback Stackelberg equilibrium} (FSE), where each player’s decision is contingent upon the current state and the actions of preceding players. 
One advantage of FSE over OLSE is its sub-game perfection, meaning that decision policies remain optimal for future stages, even if the state is perturbed at an intermediate stage. This feature is particularly beneficial in scenarios with feedback interactions among players, such as in lane merging during highway driving \cite{talebpour2015modeling} and human-robot interactions \cite{franceschi2023human}. In these situations, the sub-game perfection of FSE makes it a more suitable equilibrium concept than OLSE, as it allows players to adjust their decisions based on the current state information.

Though FSE is conceptually appealing, computing it poses significant challenges \cite{ho1981information,weintraub1983existence,xie1997time,martin2021coincidence}. 
Previous research has extensively explored the \FSE{} problem in finite dynamic games, characterized by a finite number of states and actions \cite{simaan1973additional,basar1999dynamic,tolwinski1983stackelberg,korzhyk2011stackelberg,bai2021sample,wang2022coordinating}. 
In contrast, infinite dynamic games--those with an infinite number of states and actions--
have mostly been considered within the framework of linear quadratic (LQ) games, featuring linear dynamics and stage-wise quadratic costs 
\cite{gardner1978feedback,basar1999dynamic,dockner2000differential,vamvoudakis2012online,jungers2014feedback}. 
The computation of \FSE{} for more general nonlinear games is more challenging than for LQ games. A naive application of existing dynamic programming solutions in finite dynamic games necessitates gridding the continuous state and action spaces, often leading to computational intractability \cite{bellman1956dynamic}. 
Recent works \cite{mylvaganam2014approximate, xu2020adaptive} have proposed using approximate dynamic programming to compute an approximate \FSE{} for input-affine systems. Additionally, several iterative linear-quadratic (LQ) approximation approaches have been proposed in \cite{hu2023emergent,khan2023leadership}, but they lack convergence guarantees.

Moreover, existing approaches are ill-suited for handling coupled 
equality and inequality constraints on players’ states and decisions, which frequently arise in safety-critical applications such as autonomous driving \cite{sun2018feasibility} and human-robot interaction \cite{kimmel2017invariance}. For instance, existing iterative LQ game solvers \cite{khan2023leadership,hu2023emergent} cannot be directly integrated with the primal log barrier penalty method \cite{peters2020inference} 
to incorporate these constraints.
The most relevant studies, such as \cite{mondal2019linear,fabiani2021local,maljkovic2023finding}, focus on computing OLSE in games under linear constraints. This paper aims to bridge this gap in the literature. 

Our contributions are threefold: (1) We first reformulate the $N$-player feedback Stackelberg equilibrium problem, characterized by $N$ players making sequential decisions over time, into a sequence of nested optimization problems. This reformulation enables us to derive the Karush–Kuhn–Tucker (KKT) conditions and a second-order sufficient condition for the feedback Stackelberg equilibrium. 
(2) Using these results, we propose a Newton-style primal-dual interior point (PDIP) algorithm for computing a local FSE for LQ games. Under certain regularity conditions, we show the convergence of our algorithm to a local FSE. (3) 
Finally, we propose an efficient PDIP method for approximately computing a local FSE for more general nonlinear games under (nonconvex) coupled equality and inequality constraints. The computed feedback policy locally approximates the ground truth nonlinear policy. Theoretically, we characterize the approximation error of our method, and show the exponential convergence under certain conditions. Empirically, we validate our algorithm in a highway lane merging scenario, demonstrating its ability to tolerate infeasible initializations and efficiently converge to a local FSE in constrained nonlinear~games.

\section{Related Works}
Closely related to the feedback Stackelberg equilibrium (FSE), the feedback Nash equilibrium (FNE) has been extensively studied, for example, in \cite{basar1976uniqueness, basar1999dynamic,reddy2016feedback,laine2023computation}. 
Our work builds upon \cite{laine2023computation}, where the authors proposed KKT conditions for constrained FNE. 
However, the FNE KKT conditions in \cite{laine2023computation} fail to hold true for FSE due to the decision hierarchy in FSE. In our work, we introduce a set of new KKT conditions for FSE. Another key difference is that we adopt the primal-dual interior point method for solving LQ and nonlinear games, whereas \cite{laine2023computation} considers the active-set method. In general, the former has polynomial complexity, but the latter has exponential complexity \cite{goswami2012comparative}. Moreover, we are able to prove the exponential convergence of our algorithm under certain conditions. However, there is no such convergence proof in \cite{laine2023computation}.

As highlighted in the literature, e.g., \cite{basar1999dynamic,vamvoudakis2012online,mylvaganam2014approximate,li2019two,xu2020adaptive}, the dominant approach to computing unconstrained FSE is using (approximate) dynamic programming. LQ games can be solved efficiently via exact dynamic programming; however, in more general nonlinear cases the value function could be hard to compute and, in general, has no analytical solution \cite{mylvaganam2014approximate}.
Compared with those works, our approach could be considered as computing an efficient local approximation of the value function along the state trajectory under a local FSE policy instead of approximating the value function everywhere as in \cite{mylvaganam2014approximate}. 




Finally, to further motivate our work, we examine whether the Stackelberg equilibrium can be effectively approximated by the Nash equilibrium and whether the FSE can be accurately approximated by its open-loop counterpart. According to \cite{korzhyk2011stackelberg}, in repeated matrix games, the Stackelberg equilibrium may coincide with the Nash equilibrium. However, in Appendix \ref{subsec:appendix comparing FSE and FNE}, we present a counterexample demonstrating that, in games with quadratic costs—reminiscent of oligopoly models in economics \cite{varian2014intermediate}—the Stackelberg equilibrium can deviate arbitrarily from the Nash equilibrium. 
Moreover, there is a recent trend of approximating feedback policies via receding horizon open-loop policies \cite{le2022algames,zhao2023stackelberg}, where an open-loop policy is re-solved at each time for future steps. 
However, we show in another counter-example in Appendix \ref{subsec: appendix RHOL fails to approximate FSE} that the trajectory under the feedback Stackelberg policy and the one under the receding horizon open-loop Stackelberg policy could be quite different, even if there is no state perturbation. Thus, it is essential to develop specific tools for computing the feedback Stackelberg equilibrium.





\section{Constrained Feedback Stackelberg Games}
In this section, we introduce the formulation of constrained feedback Stackelberg games. We formulate the problem by extending the $N$-player feedback Stackelberg games \cite{gardner1978feedback} to its constrained setting. We denote by $\mathbb{N}$ and $\mathbb{R}$ the sets of natural numbers and real numbers, respectively. Given $j,k\in\mathbb{N}$, we denote by $\mathbf{I}_{j}^k = \{j,j+1,\dots,k\}$ if $j\le k$ and $\emptyset$ otherwise. Let $T\in\mathbb{N}$ be the time horizon over which the game is played. 
At each time $t$, we denote by $x_t$ and $u_t^i\in\mathbb{R}^{m_i}$ the state of the entire game and the control input of player $i$, respectively. 
We define $u_t:=[u_t^1,u_t^2,\dots,u_t^N]\in\mathbb{R}^m$, with $m := \sum_{i=1}^N m_i$, to be the joint control input at time $t$.
Moreover, at each time $t$, players make decisions in the order of their indices. 
We consider the time-varying dynamics
\begin{equation}\label{eq:dynamics}
    x_{t+1} = f_t(x_t,u_t),
\end{equation}
where $f_t(x_t,u_t): \mathbb{R}^{n}\times \mathbb{R}^m \to \mathbb{R}^n$ is assumed to be a twice differentiable function. Given a sequence of control inputs $\uvec:=[u_0,u_1,\dots,u_T]\in\mathbb{R}^{Tm}$, we denote by $\xvec:=[x_0,x_1,\dots,\allowbreak x_{T+1}]\in\mathbb{R}^{(T+1)n}$ a state trajectory under dynamics \eqref{eq:dynamics}.  

At each time $t\in\mathbf{I}_0^T$, we denote the stage-wise cost of player $i\in\mathbf{I}_1^N$ by $\ell_t^i(x_t,u_t):\mathbb{R}^{n}\times \mathbb{R}^m\to \mathbb{R}$, and associate with each player a terminal cost, $\ell_{T+1}^i(x_{T+1}): \mathbb{R}^n \to \mathbb{R}$.  Each player $i\in\mathbf{I}_1^N$ considers the following time-separable costs,\vspace{-0.5em}
\begin{equation}\label{eq:costs}
    J^i(\xvec,\uvec) = \sum_{t=0}^T \ell_t^i(x_t,u_t) + \ell_{T+1}^i(x_{T+1}).\vspace{-0.5em}
\end{equation}
Moreover, let $n_{h,t}^i$ and $n_{g,t}^i$ be the number of equality and inequality constraints held by player $i\in\mathbf{I}_1^N$ at time $t$, respectively. We denote the equality and inequality constraint functions of player $i$ by $h_t^i(x_t,u_t): \mathbb{R}^n\times \mathbb{R}^m \to \mathbb{R}^{n_{h,t}^i}$ and $g_t^i(x_t,u_t):\mathbb{R}^n\times \mathbb{R}^m \to \mathbb{R}^{n_{g,t}^i}$, respectively. We specify the stage-wise equality and inequality constraints of player $i\in\mathbf{I}_{1}^N$ as
\begin{equation}\label{eq:constraints at t}
    \begin{aligned}
        0=h_t^i(x_t,u_t),\ 0\le g_t^i(x_t,u_t).
    \end{aligned}
\end{equation}
At the terminal time $t=T+1$, we represent the equality and inequality constraint functions of player $i\in\mathbf{I}_1^N$ by $h_{T+1}^i(x_{T+1}):\mathbb{R}^{n} \to \mathbb{R}^{n_{h,T+1}^i}$ and $g_{T+1}^i(x_{T+1}):\mathbb{R}^n \to \mathbb{R}^{n_{g,{T+1}}^i}$, respectively. We consider the following equality and inequality constraints of player $i\in\mathbf{I}_1^N$ at the terminal time,
\begin{equation}\label{eq:terminal constraints}
    0=h_{T+1}^i(x_{T+1}), \ 0\le g_{T+1}^i(x_{T+1}).
\end{equation}
We remark that these definitions generate coupled dynamics and constraints among different players at each time $t\in \mathbf{I}_0^{T+1}$. 
We consider the following regularity assumption, following \cite{laine2023computation,chinchilla2023newton}.
\begin{assumption}\label{assumption:feasible}
    The feasible set $\mathcal{F}:=\{x\in\mathbb{R}^{(T+1)n},u\in\mathbb{R}^{Tm}:h_t^i(x_t,u_t)=0,g_t^i(x_t,u_t)\ge0, h_{T+1}^i(x_{T+1})=0,g_{T+1}^i(x_{T+1})\ge 0, x_{t+1}=f_t(x_t,u_t),\forall i\in\mathbf{I}_{1}^N,t\in\mathbf{I}_0^T \}   $
    is compact. The costs, dynamics, equality and inequality constraints are twice differentiable and bounded, but could be nonconvex in general. 
\end{assumption}


\subsection{Local Feedback Stackelberg Equilibria}

In this subsection, we formalize the decision process of feedback Stackelberg games. Before doing that, we introduce a few notations to compactly represent different players' control at different times. We define $u_{t:t'}^{i:i'}:=\{u_\tau^j,\tau\in\mathbf{I}_{t}^{t'}, j\in\mathbf{I}_{i}^{i'}\}$. In particular, we define $u_t^{1:i-1}:=\emptyset$ when $i=1$ and $u_t^{i+1:N}:=\emptyset$ when $i=N$. We also denote by $u_{t+1:T}^{1:i} := \emptyset$ when $t = T$. 

The policy of each player can be defined as follows. At the $t$-th stage, since player 1 makes a decision first, its policy function $\pi_t^1(x_t):\mathbb{R}^n \to \mathbb{R}^{m_1}$ depends only on the state $x_t$. For players $i\in\mathbf{I}_2^N$, the policies are modeled as $\pi_t^i(x_t,u_t^{1:i-1}):\mathbb{R}^n\times \mathbb{R}^{\sum_{j=1}^{i-1}m_j}\to \mathbb{R}^{m_i}$. 
We will define the concept of \emph{local feedback Stackelberg equilibria} in the remainder of this subsection. 

At the terminal time $t=T+1$, we define the state-value functions for a player $i\in\mathbf{I}_1^N$ as
\begin{equation}\label{eq:V_{T+1}}
        V_{T+1}^{i}(x_{T+1}):= \left\{\begin{aligned} &\ell_{T+1}^i(x_{T+1})  &&\textrm{if }\left\{\substack{0=h_{T+1}^i(x_{T+1})\\ 0\le g_{T+1}^i(x_{T+1}) }\right. \\ & \infty &&\textrm{else.}  \end{aligned}  \right. 
\end{equation}

At time $t\le T$, we first construct the state-action-value function for player $N$:
\begin{equation}\label{eq:Z_t^N}
    Z_{t}^{N}(x_t, u_t^{1:N-1}, u_t^N):= \left\{\begin{aligned} & \ell_t^N(x_t,u_t)+V_{t+1}^{N}(x_{t+1}) &&\textrm{if }\left\{\substack{0= x_{t+1}-f_t(x_t,u_t) \\ 0=h_t^N(x_t,u_t)\\ 0\le g_t^N(x_t,u_t) }\right. \\ &\infty &&\textrm{else.} \end{aligned} \right.
\end{equation}
Given $(x_t,u_t^{1:N-1})$, there could be multiple $u_t^N$ minimizing $Z_t^N(x_t,u_t^{1:N-1},u_t^N)$. We define player $N$'s local FSE policy $\pi_t^{N}$ by picking an arbitrary local minimizer $u_t^{N*}$, 
\begin{equation}\label{eq:N-th player's strategy}
    \pi_t^{N}(x_t,u_t^{1:N-1}) := u_t^{N*} \in \arg \min_{\tilde{u}_t^N}  Z_t^{N}(x_t,u_t^{1:N-1},\tilde{u}_t^N).
\end{equation}
We then construct the state-action-value function of player $i\in\mathbf{I}_2^{N-1}$,
\begin{equation}\label{eq:Z_t^i}
    Z_{t}^{i}(x_t, u_t^{1:i-1}, u_t^i):= \left\{\begin{aligned} & \ell_t^i(x_t,u_t)+V_{t+1}^{i}(x_{t+1}) &&\textrm{if }\left\{\substack{ 0=x_{t+1} - f_t(x_t,u_t) \\ 0=h_t^i(x_t,u_t)\\ 0 \le g_t^i(x_t,u_t)\\
    u_t^j=\pi_t^{j}(x_t,u_t^{1:j-1}),\ j\in\mathbf{I}_{i+1}^N}\right. \\ &\infty &&\textrm{else,} \end{aligned} \right.
\end{equation}
and its local FSE policy $\pi_t^{i}$ by picking an arbitrary local minimizer $u_t^{i^*}$,
\begin{equation}\label{eq:i-th player's strategy}
        \pi_t^{i}(x_t,u_t^{1:i-1}): = 
        u_t^{i*}\in \arg\min_{\tilde{u}_t^i}  Z_t^i(x_t,u_t^{1:i-1}, \tilde{u}_t^i).
\end{equation}
We finally construct the state-action-value function of the first player:
\begin{equation}\label{eq:Z_t^1}
    Z_t^{1}(x_t,u_t^1): = \left\{ \begin{aligned}
        &\ell_t^1(x_t,u_t)+ V_{t+1}^{1}(x_{t+1}) &&\textrm{if }\left\{\substack{0=x_{t+1}-f_t(x_t,u_t) \\ 0=h_t^1(x_t,u_t)\\ 0\le g_t^1(x_t,u_t)\\  u_t^j=\pi_t^{j}(x_t,u_t^{1:j-1}),\ j\in\mathbf{I}_{2}^N}\right. \\
        &\infty &&\textrm{else,}
    \end{aligned} \right.
\end{equation}
and its local FSE policy
\begin{equation}\label{eq:leader's strategy}
        \pi_t^{1}(x_t) := 
        u_t^{1*}\in \arg \min_{\tilde{u}_t^1} Z_t^1(x_t,\tilde{u}_t^1).
\end{equation}
We define the state-value function of player $i\in\{1,2,\dots,N\}$ at time $t\le T$ as 
\begin{equation}\label{eq:value function}
    \begin{aligned}
        V_t^i(x_t)=&Z_t^{i}(x_t,u_t^{1*},\dots,u_t^{i*}),
    \end{aligned}
\end{equation}
where $u_t^{j*} = \pi_t^{j}(x_t, u_t^{1:(j-1)*}), \forall j \in \mathbf{I}_1^i$.

We formally define the local feedback Stackelberg equilibria as follows.
\begin{definition}[Local Feedback Stackelberg Equilibria \cite{basar1999dynamic}]\label{def:local fse policy}
    Let $\{\pi_t^{i}\}_{t=0,i=1}^{T,N}$ be a set of policies defined in \eqref{eq:N-th player's strategy}, \eqref{eq:i-th player's strategy} and \eqref{eq:leader's strategy}, and define $(\xvec^*, \uvec^*)$ to be a state and control trajectory under the policies $\{\pi_t^{i}\}_{t=0,i=1}^{T,N}$, i.e.,
    \begin{equation}
        \begin{aligned}
            x_{t+1}^* = f_t(x_t^*,u_t^*),\  
            u_t^{i*} =  \pi_t^{i}(x_t^*,u_t^{1:(i-1)*}),\  \forall t \in \mathbf{I}_0^T,\  i\in\mathbf{I}_1^N.
        \end{aligned}
    \end{equation}
    We say that $(\xvec^*, \uvec^*)$ is a \textbf{local feedback Stackelberg equilibrium trajectory} 
    if there exists an $\epsilon>0$ such that, for all $t\in\mathbf{I}_0^T$,
    \begin{equation}
        \begin{aligned}
            Z_t^{1}(x_t^*, \tilde{u}_t^1) &\ge Z_t^{1}(x_t^*,u_t^{1*}),\\
            &\ \vdots\\
            Z_t^{N}(x_t^*,u_t^{1*},\dots,u_t^{(N-1)*},\tilde{u}_t^N)&\ge Z_t^N(x_t^*,u_t^{1*},\dots,u_t^{(N-1)*},u_t^{N*}) 
        \end{aligned}
    \end{equation}
    for all $ \tilde{u}_t^1 \in \{u:\|u - u_t^{1*} \|_2\le \epsilon\}$, $\dots$, and $\tilde{u}_t^N \in \{u:\|u-u_t^{N*}\|_2\le \epsilon\} $. 
\end{definition}

The above definition encapsulates the traditional approach to computing feedback Stackelberg equilbiria. This involves optimizing over state-action-value functions, which are obtained by integrating other players' policies into each player's problem and then recording the overall costs. 
\begin{remark}[Existence of Local Feedback Stackelberg Equilibria]
In general, it is difficult to establish a sufficient condition for the existence of a feedback Stackelberg equilibrium \cite{bensoussan2019feedback}. The main difficulty is that the decision problem of each player is nested within that of other players. It must be solved hierarchically. For example, the existence of feedback Stackelberg policies \cite{lucchetti1987existence} of a player $i\in\mathbf{I}_1^{N-1}$ is related to the topological properties of the set of policies of players $j\in\mathbf{I}_{i+1}^N$. Even if all the players' costs are convex, the feedback Stackelberg policy of player $N$ at the terminal time could be lower semi-continuous. Subsequently, the cost of player $(N-1)$ could become upper semi-continuous when substituting in the $N$-th player's policy into the $(N-1)$-th player's cost. Since there may not exist a solution when minimizing an upper semi-continuous function, there may not exist a feedback Stackelberg policy for player $(N-1)$. However, if we can show that the policy of each player is always continuous in the state and prior players' controls, and the continuous costs are defined on a compact domain, then there exist feedback Stackelberg equilibria \cite{basar1999dynamic}. 
\end{remark}

We will now proceed to characterize the feedback Stackelberg equilibria in greater detail in the subsequent section.

\section{Necessary and Sufficient Conditions for Local Feedback Stackelberg Equilibria}

We show in the following theorem that the dynamic programming problem, as described in Definition \ref{def:local fse policy}, can be reformulated as a sequence of nested constrained optimization problems. In this reformulation, the policies for other players are integrated as constraints within the problem of each player $i$, instead of being directly substituted into the costs for computing state-action-value functions, as is typical in traditional optimal control literature. 
This approach enables us to establish KKT conditions for feedback Stackelberg games in the latter part of this subsection.
\begin{theorem}\label{thm:nested constrained optimization problem}
    Under Assumption~\ref{assumption:feasible}, for each $t\in \mathbf{I}_0^T$ and each $i\in\mathbf{I}_1^N$, a local feedback Stackelberg policy $\piti$ can be equivalently represented as an optimization problem, given the knowledge of current state $\knownxt$ and prior players' actions $\knownutoneiminusone$, 
    \begin{subequations}\label{eq:player's constrained problem}
    \begin{align}
        \piti(\knownxt,\knownutoneiminusone) = \tilde{u}_t^i
        \in 
        \underset{u_t^i}{\arg} \min_{\substack{
        u_{t}^{i:N} \\   u_{t+1:T}^{1:N}  \\ 
        x_{t+1:T+1}
        }} & \ell_t^i(\knownxt, \knownutoneiminusone, u_t^{i:N}) +\hspace{-0.2cm} \sum_{\tau = t+1}^T \ell_{\tau}^i(x_\tau, u_\tau)  +  \ell_{T+1}^i(x_{T+1})\\
        \textrm{s.t. }
        &0=u_{t}^j - \pitj(\knownxt, \knownutoneiminusone,u_t^{i:j-1}), &&\hspace{-1.2cm} j\in  \mathbf{I}_{i+1}^N \label{eq:subsequent player's strategy} \\
        &0=x_{t+1} - f_t(\knownxt, \knownutoneiminusone, u_t^{i:N}), \label{eq:dynamics at time t}\\
        &0 = h_t^i(\knownxt,\knownutoneiminusone, u_t^{i:N}), \
        0\le g_t^i(\knownxt,\knownutoneiminusone, u_t^{i:N}) \label{eq:constraints at time t}\\
        &0=u_{\tau}^j - \pitauj(x_\tau, u_\tau^{1:j-1}), &&\hspace{-3.2cm} \tau \in \mathbf{I}_{t+1}^T,j\in\mathbf{I}_1^N\setminus\{i\} \label{eq:future player's strategy} \\
        &0=x_{\tau+1} - f_\tau(x_\tau,u_\tau), &&\hspace{-1.2cm} \tau\in\mathbf{I}_{t+1}^T \label{eq:future player's dynamics}\\
        &0 = h_\tau^i(x_\tau,u_\tau), \
        0\le g_\tau^i(x_\tau,u_\tau), &&\hspace{-1.2cm} \tau\in\mathbf{I}_{t+1}^T \label{eq:future player's constraints} \\
        &0=h_{T+1}^i(x_{T+1}), \
        0\le g_{T+1}^i(x_{T+1})
    \end{align}
    \end{subequations}
    where we drop \eqref{eq:subsequent player's strategy} when $i=N$, and we drop \eqref{eq:future player's strategy}, \eqref{eq:future player's dynamics} and \eqref{eq:future player's constraints} when $t=T$. 
    The notation $\arg_{u}\min_{u,v}$ represents that we minimize over $(u,v)$ but only return $u$ as an output.
\end{theorem}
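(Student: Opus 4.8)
The plan is to recognize the claimed equivalence as a Stackelberg-adapted form of Bellman's principle of optimality and to prove it by backward induction, running the induction over $t$ from the terminal stage down to $t=0$ and, within each fixed $t$, over the players in the reverse order $i=N,N-1,\dots,1$ (so that the later movers' policies needed in \eqref{eq:subsequent player's strategy} are already available when player $i$ is treated). From player $i$'s viewpoint, the policies $\pi_\tau^j$ of the other players ($j\ne i$) and of the subsequent movers at the current stage ($j>i$) act as fixed ``environment responses,'' so that player $i$ faces an ordinary constrained optimal control problem whose single-shooting reformulation is exactly \eqref{eq:player's constrained problem}. The engine of the argument is the standard min-of-min interchange: minimizing the current-stage cost plus an optimally chosen tail equals jointly minimizing over the current and all future controls of player $i$.

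Concretely, I would first establish an \emph{unrolling lemma}: by backward induction on the definitions \eqref{eq:V_{T+1}} and \eqref{eq:Z_t^N}--\eqref{eq:value function}, the value $V_{t+1}^i(x_{t+1})$ equals the minimum of player $i$'s tail cost $\sum_{\tau=t+1}^T \ell_\tau^i + \ell_{T+1}^i$ over player $i$'s own future controls $u_{t+1:T}^i$, subject to the dynamics \eqref{eq:future player's dynamics}, player $i$'s stage and terminal constraints \eqref{eq:future player's constraints}, and every other player following its policy through \eqref{eq:future player's strategy} (with the value $+\infty$ whenever this tail program is infeasible). The observation that makes this hold is that in the recursion $V_{t+1}^i = Z_{t+1}^i(x_{t+1},u_{t+1}^{1*},\dots,u_{t+1}^{i*})$ the earlier movers $j<i$ enter at their policy values through the arguments, player $i$ enters at its own minimizer $\pi_{t+1}^i$, and the later movers $j>i$ are pinned to their policies inside $Z_{t+1}^i$; hence at every future stage all players are on-policy except player $i$'s control, which is precisely what is being optimized.

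Next I would substitute this unrolled expression into $Z_t^i(\bar x_t, \bar u_t^{1:i-1}, u_t^i) = \ell_t^i + V_{t+1}^i(x_{t+1})$ from \eqref{eq:Z_t^i} (and the endpoint cases \eqref{eq:Z_t^N}, \eqref{eq:Z_t^1}), and minimize over $u_t^i$ to recover $\pi_t^i$. Because $V_{t+1}^i$ is itself an infimum over $u_{t+1:T}^i$, the outer minimization over $u_t^i$ and the inner one over $u_{t+1:T}^i$ collapse into a single joint minimization over $u_{t:T}^i$; combining this with the current-stage constraints \eqref{eq:subsequent player's strategy}--\eqref{eq:constraints at time t} carried by $Z_t^i$ reproduces exactly the objective and feasible set of \eqref{eq:player's constrained problem}, with only $u_t^i$ returned. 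Matching the $+\infty$ branches of $Z$ and $V$ against the explicitly listed constraints of \eqref{eq:player's constrained problem} shows the two feasible sets coincide, closing the induction.

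The main obstacle is twofold. First, the equilibrium is defined only locally (Definition \ref{def:local fse policy}), so the min-of-min interchange must be carried out for \emph{local} minimizers: one must show that a local minimizer of the joint tail program restricts to per-stage local minimizers and, conversely, that the on-policy tail assembled from the recursive local minimizers is locally optimal for the joint program. This step relies on the continuity, differentiability, and compactness granted by Assumption \ref{assumption:feasible} to transport neighborhoods consistently through the policy substitutions. Second, careful bookkeeping of the nested policy constraints is required: the later movers' policies $\pi_t^j$ and $\pi_\tau^j$ depend on player $i$'s chosen controls, so one must verify that the tail constructed from these policies is genuinely the closed-loop response of the other players and that the substitution is well defined—this is exactly where the hierarchical, stage-ordered structure of the definitions \eqref{eq:Z_t^N}--\eqref{eq:value function} does the real work.
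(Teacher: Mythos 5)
Your proposal is correct and follows essentially the same route as the paper's proof: the paper likewise rewrites the nested program \eqref{eq:player's constrained problem} as an outer minimization over $u_t^i$ of the stage cost plus $V_{t+1}^i(x_{t+1})$, folding the tail constraints \eqref{eq:future player's strategy}--\eqref{eq:future player's constraints} into the value function and identifying the result with $\arg_{u_t^i}\min Z_t^i(\knownxt,\knownutoneiminusone,u_t^i)$. Your treatment is in fact somewhat more careful than the paper's, which asserts the min-of-min collapse and the unrolling of $V_{t+1}^i$ without explicitly addressing the local-minimizer bookkeeping you flag.
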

\begin{proof}
    The proof can be found in the Appendix. 
\end{proof}
In what follows, we will characterize the KKT conditions of the constrained optimization problems in \eqref{eq:player's constrained problem}. Before doing that, we first introduce Lagrange multipliers, which facilitate the formulation of Lagrangian functions for all players.





Let $t\in\mathbf{I}_0^{T}$ and $i\in\mathbf{I}_1^N$. We denote by $\lambda_t^i\in\mathbb{R}^{n}$ the Lagrange multiplier for the dynamics constraint $0=x_{t+1}-f_t(x_t,u_t)$. Let $\mathbb{R}_{\ge0}$ be the set of non-negative real numbers. We define $\mu_t^i\in\mathbb{R}^{n_{h,t}^i}$ and $\gamma_t^i\in\mathbb{R}_{\ge 0}^{n_{g,t}^i}$ to be the Lagrange multipliers for the constraints $0=h_t^i(x_t,u_t)$ and $0\le g_t^i(x_t,u_t)$, respectively. When $t\le T$, the constrained problem \eqref{eq:player's constrained problem} of player $i<N$ considers the feedback interaction constraint $0=u_t^j - \pitj(x_t,u_t^{1:j-1})$, $j\in\mathbf{I}_{i+1}^N$. Thus, we associate those constraints with multipliers $\psi_t^i:=[\psi_t^{i,i+1},\psi_t^{i,i+2},\dots,\psi_t^{i,N}]$, where $\psi_t^{i,j}\in\mathbb{R}^{m_i}$. Moreover, when $t<T$, the constrained problem \eqref{eq:player's constrained problem} of a player $i\le N$ includes the feedback interaction constraints $0=u_{\tau+1}^j -\pi_{\tau+1}^j(x_{\tau+1},u_{\tau+1}^{1:j-1}) $, for $\tau\ge t$ and $j\in \mathbf{I}_1^N\setminus\{i\}$. Thus, we associate those constraints with multipliers $\eta_t^i:=[\eta_t^{i,1},\dots, \eta_t^{i,i-1}, \eta_t^{i,i+1},\allowbreak\dots,\eta_t^{i,N}]$, where $\eta_t^{i,j}\in\mathbb{R}^{m_j}$. Finally, we simplify the notation by defining $\lambda_t:=[\lambda_t^1,\lambda_t^2,\dots,\lambda_t^N]$, and define $\mu_t$, $\gamma_t$, $\eta_t$, and $\psi_t$ accordingly.  

Subsequently, we define the Lagrangian functions of all the players. We first consider player $i\in \mathbf{I}_{1}^{N}$,
\begin{equation}\label{eq:Lagrangian for other players, time t}
    \begin{aligned}
        L_t^i(x_{t:t+1},u_{t:t+1},\lambda_t,\mu_t, \gamma_t, \eta_t,\psi_t): &= \ell_{t}^i(x_t,u_t) - \lambda_t^{i\top} (x_{t+1} - f_t(x_t,u_t)) \\
        &- \mu_t^{i\top } h_t^i(x_t,u_t)-\gamma_t^{i\top }g_t^i(x_t,u_t) \\ & - \sum_{j\in\mathbf{I}_{i+1}^N} \psi_t^{i,j\top}(u_t^{j} - \pi_t^j(x_t,u_t^{1:j-1})) 
          \\&- \sum_{j\in \mathbf{I}_1^N\setminus\{i\}}  \eta_t^{i,j^\top} (u_{t+1}^j - \pi_{t+1}^j(x_{t+1}, u_{t+1}^{1:j-1})) 
    \end{aligned}
\end{equation}
where the right hand side terms represent player $i$'s cost, dynamics constraint, equality and inequality constraints, and constraints encoding the feedback interaction among players at the current and future time steps. 

Furthermore, at the terminal time $t=T$, for player $i\in\mathbf{I}_1^{N}$, we consider
\begin{equation}\label{eq:Lagrangian of player i, time T}
    \begin{aligned}
        L_{T}^i(x_{T:T+1},u_T,\lambda_T, \mu_{T:T+1}, \gamma_{T:T+1} ,\psi_T):&=\ell_T^i(x_T,u_T) + \ell_{T+1}^i(x_{T+1}) \\& - \lambda_T^{i\top} (x_{T+1} - f_T(x_T,u_T)) \\ & - \mu_T^{i\top}h_T^i(x_T,u_T) - \mu_{T+1}^{i\top} h_{T+1}^i(x_{T+1}) \\& - \gamma_T^{i\top}g_T^i(x_T,u_T)-\gamma_{T+1}^{i\top} g_{T+1}^i(x_{T+1})   \\ & - \sum_{j\in \mathbf{I}_{i+1}^N} \psi_T^{i,j\top} (u_T^j - \pi_T^j(x_T,u_T^{1:j-1})) 
    \end{aligned}
\end{equation}
where the right hand side terms represent player $i$'s costs, dynamics constraint, equality and inequality constraints, and constraints encoding the feedback interaction among players at the terminal time $T$. Note that there is no more decision to be made at time $t=T+1$, and therefore, there is no term representing the feedback interactions among players for future time steps in \eqref{eq:Lagrangian of player i, time T}, which is different from \eqref{eq:Lagrangian for other players, time t}. 


For all time steps $t\in\mathbf{I}_0^T$ and players $i\in\mathbf{I}_1^N$, assuming the state $x_t$ is given and each player $j<i$ has taken action $u_t^j$, we formulate the Lagrangian of the problem \eqref{eq:player's constrained problem} of player $i$ at the $t$-th stage as
\begin{equation}
    \begin{aligned}    
        \mathcal{L}_t^i (x_{t:T+1}, u_{t:T},& \lambda_{t:T}, \mu_{t:T+1}, \gamma_{t:T+1}, \eta_{t:T-1},\psi_{t:T} ): = \sum_{\tau=t}^{T-1} L_\tau^i(x_{\tau:\tau+1},u_{\tau:\tau+1}, \\ &\lambda_\tau, \mu_\tau, \gamma_\tau, \eta_\tau, \psi_\tau)  + L_T^i(x_{T:T+1},u_T,\lambda_T, \mu_{T:T+1}, \gamma_{T:T+1}, \psi_T)
    \end{aligned}
\end{equation}
where for each $\tau\in\mathbf{I}_{t+1}^{T}$, the terms associated with $\psi_\tau$ in $L_\tau^i$ ensure constraints already addressed by the terms associated with $\eta_{\tau-1}$ in $L_{\tau-1}^i$ and can therefore be dropped when defining $\mathcal{L}_t^i$. 
We can concatenate the KKT conditions of each player at each stage, and summarize the overall KKT conditions for \eqref{eq:player's constrained problem} in the following theorem.
\begin{theorem}[Necessary Condition]\label{thm:necessary condition}
    Under Assumption \ref{assumption:feasible}, let $(\xvec^*,\uvec^*)$ be a local feedback Stackelberg equilibrium trajectory. Suppose that the Linear Independence Constraint Qualification (LICQ) \cite{nocedal1999numerical} and strict complementarity condition \cite{boyd2004convex} are satisfied at $(\xvec^*,\uvec^*)$. Furthermore, suppose $\{\pi_t^{i}\}_{t=0,i=1}^{T,N}$ is a set of local feedback Stackelberg policies and $\pi_t^i$ is differentiable around $(x_t^*,u_t^{1:(i-1)*})$, $\forall t\in\mathbf{I}_0^T$, $i\in\mathbf{I}_1^N$. The KKT conditions of \eqref{eq:player's constrained problem} can be formulated as, for all $i\in\mathbf{I}_1^N$, $t\in\mathbf{I}_0^T$,
    \begin{equation}\label{eq:KKT}
        \begin{aligned}
            0&= \nabla_{u_{\tau}^i}\mathcal{L}_t^i (x^*_{t:T+1}, u^*_{t:T}, \lambda_{t:T}, \mu_{t:T+1}, \gamma_{t:T+1}, \eta_{t:T-1},\psi_{t:T} )&&\forall \tau\in \mathbf{I}_{t}^T\\
            0&= \nabla_{x_{\tau}} \mathcal{L}_t^i (x^*_{t:T+1} u^*_{t:T}, \lambda_{t:T}, \mu_{t:T+1}, \gamma_{t:T+1}, \eta_{t:T-1},\psi_{t:T} ) &&\forall \tau \in \mathbf{I}_{t+1}^{T+1}\\
            0&= \nabla_{u_{t}^{j}}\mathcal{L}_t^i (x^*_{t:T+1}, u^*_{t:T}, \lambda_{t:T}, \mu_{t:T+1}, \gamma_{t:T+1}, \eta_{t:T-1},\psi_{t:T} )&& \forall j\in \mathbf{I}_{i+1}^N\\
            0&= \nabla_{u_{\tau}^{j}}\mathcal{L}_t^i (x^*_{t:T+1}, u^*_{t:T}, \lambda_{t:T}, \mu_{t:T+1}, \gamma_{t:T+1}, \eta_{t:T-1},\psi_{t:T} )&& \forall j\in\mathbf{I}_{1}^N\setminus\{i\}, \forall \tau\in \mathbf{I}_{t+1}^T\\
            0& = x^*_{\tau+1} - f_\tau(x^*_\tau,u^*_\tau) && \forall \tau\in \mathbf{I}_{t}^T\\
            0&=h_\tau^i(x^*_\tau,u^*_\tau) && \forall \tau\in \mathbf{I}_{t}^T\\ 
            0&\le \gamma_\tau^{i} \perp g_\tau^i(x^*_\tau,u^*_\tau)\ge 0 && \forall \tau\in \mathbf{I}_{t}^T\\
            0&= h_{T+1}^i(x^*_{T+1})\\
            0&\le \gamma_{T+1}^{i} \perp g_{T+1}^i(x^*_{T+1})\ge 0
\end{aligned}
    \end{equation}
    where 
    $\perp$ represents the complementary slackness condition \cite{boyd2004convex}. Then, there exists Lagrange multipliers 
    $\lambdavec:=[\lambda_t]_{t=0}^T$, $\muvec:=[\mu_t]_{t=0}^{T+1}$, $\gammavec: = [\gamma_t]_{t=0}^{T+1}$, $\etavec: = [\eta_t]_{t=0}^{T-1}$, and $\psivec:=[\psi_t]_{t=0}^T$, such that \eqref{eq:KKT} holds true.
\end{theorem}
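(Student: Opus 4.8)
The plan is to apply the standard first-order KKT necessary conditions for nonlinear programs (as in \cite{nocedal1999numerical}) separately to each player's nested optimization problem \eqref{eq:player's constrained problem}, and then to read off the combined system \eqref{eq:KKT} from the resulting per-player, per-stage stationarity, primal feasibility, dual feasibility, and complementary slackness conditions. What makes this tractable is that Theorem~\ref{thm:nested constrained optimization problem} already recasts each local feedback Stackelberg policy $\piti$ as a genuine constrained minimization over the variables $u_t^{i:N}$, $u_{t+1:T}^{1:N}$, and $x_{t+1:T+1}$, with $\knownxt$ and $\knownutoneiminusone$ treated as fixed parameters. Evaluating at the equilibrium values $\knownxt = x_t^*$ and $\knownutoneiminusone = u_t^{1:(i-1)*}$ and rolling out the policies, the equilibrium trajectory $(\xvec^*,\uvec^*)$ is simultaneously a local minimizer of \eqref{eq:player's constrained problem} for every pair $(t,i)$, which is precisely the regime in which the KKT conditions are necessary.

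First I would fix $t\in\mathbf{I}_0^T$ and $i\in\mathbf{I}_1^N$ and treat the policies $\pi_\tau^j$ entering constraints \eqref{eq:subsequent player's strategy} and \eqref{eq:future player's strategy} as fixed, differentiable functions; this is legitimate because the hypothesis assumes each such $\pi_\tau^j$ is differentiable around the equilibrium point, so the nested dependence collapses into an ordinary nonlinear program whose data are all twice differentiable by Assumption~\ref{assumption:feasible}. Next I would attach the multipliers $\lambda_\tau^i,\mu_\tau^i,\gamma_\tau^i,\psi_\tau^{i,j},\eta_\tau^{i,j}$ to the respective constraints, assemble the stagewise Lagrangians \eqref{eq:Lagrangian for other players, time t} and \eqref{eq:Lagrangian of player i, time T}, and verify that their sum is exactly $\mathcal{L}_t^i$ once the redundant future multipliers $\psi_\tau$, $\tau>t$, are dropped. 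The appeal to LICQ at $(\xvec^*,\uvec^*)$ then guarantees existence of these multipliers, and the KKT theorem delivers: stationarity of $\mathcal{L}_t^i$ with respect to $u_t^i$, $x_\tau$ for $\tau\in\mathbf{I}_{t+1}^{T+1}$, $u_t^j$ for $j\in\mathbf{I}_{i+1}^N$, and $u_\tau^j$ for $j\in\mathbf{I}_1^N\setminus\{i\}$, $\tau\in\mathbf{I}_{t+1}^T$, i.e.\ the first four lines of \eqref{eq:KKT}; primal feasibility of the dynamics, equality, and inequality constraints, i.e.\ the middle lines; and dual feasibility together with complementary slackness, i.e.\ the final lines. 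Differentiating the policy constraint terms via the chain rule produces the $\nabla\pi_\tau^j$ factors carried inside \eqref{eq:Lagrangian for other players, time t}, which is exactly why differentiability of the policies is required.

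The main obstacle I anticipate is the bookkeeping needed to show that the system obtained from the trimmed Lagrangian $\mathcal{L}_t^i$ coincides with \eqref{eq:KKT} rather than an over-determined variant. The delicate point is the redundancy in the feedback-interaction constraints: the requirement $u_\tau^j=\pi_\tau^j(x_\tau,u_\tau^{1:j-1})$ for $j>i$ at a future time $\tau>t$ is encoded twice, once by a $\psi_\tau$ term in $L_\tau^i$ and once by an $\eta_{\tau-1}$ term in $L_{\tau-1}^i$, so I must argue that deleting the $\psi_\tau$ terms for $\tau>t$ changes neither the feasible set nor the stationarity equations, and that the surviving $\eta$ and $\psi_t$ multipliers suffice to certify the same first-order conditions. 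A secondary subtlety is checking that the per-stage gradient contributions combine consistently across the horizon, so that the concatenated stationarity conditions in $x_\tau$ and $u_\tau^j$ hold as written. Once this consistency is established, the remaining feasibility and complementarity statements follow directly from the constraints of \eqref{eq:player's constrained problem}; I note that LICQ alone underwrites the first-order conditions, with strict complementarity reserved for sharpening the complementary slackness and supporting the subsequent second-order and sensitivity analysis.
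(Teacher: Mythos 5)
Your proposal is correct and follows essentially the same route as the paper: both apply the standard first-order KKT necessary conditions (under LICQ) to the nested reformulation of Theorem~\ref{thm:nested constrained optimization problem}, treating the subsequent and future players' policies as fixed differentiable maps, and then read off stationarity, primal feasibility, and complementarity row by row from the trimmed Lagrangian $\mathcal{L}_t^i$. The redundancy issue you flag between the $\psi_\tau$ terms for $\tau>t$ and the $\eta_{\tau-1}$ terms is handled identically in the paper (it is resolved in the definition of $\mathcal{L}_t^i$, where those $\psi_\tau$ terms are dropped), so no genuinely different machinery is involved.
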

\begin{proof}
    The proof can be found in the Appendix.
\end{proof}

Constructing the KKT conditions in \eqref{eq:KKT} requires the computation of policy gradients, $\{\nabla \pi_t^i\}_{t=0,i=1}^{T,N}$, which appear in the first four rows of \eqref{eq:KKT}. However, knowing the policy itself is not required, as any solution satisfying the KKT conditions obeys the corresponding feedback Stackelberg policy, as shown in the proof of Theorem~\ref{thm:necessary condition}. 
A key distinction between \eqref{eq:KKT} and the FNE KKT conditions in \cite{laine2023computation} lies in the accommodation of a decision hierarchy among the $N$ players at each stage. This is reflected in the terms $- \sum_{j\in \mathbf{I}_{i+1}^N} \psi_t^{i,j\top} (u_t^j - \pi_t^j(x_t,u_t^{1:j-1}))$ in the Lagrangian $\mathcal{L}_t^i$. Additionally, this decision hierarchy differentiates the construction of the FSE KKT conditions from those of FNE. We will outline a detailed procedure for constructing the FSE KKT conditions in Sections~\ref{sec:LQ games} and \ref{sec:nonlinear games}, with an example provided in Appendix~\ref{appendix:KKT}. 

Furthermore, we propose a sufficient condition for feedback Stackelberg equilibrium trajectories in the following theorem.
\begin{theorem}[Sufficient Condition]\label{thm:sufficient condition}
    Let $(\xvec^*,\uvec^*)$ be a trajectory and $\{\pi_t^i\}_{t=0,i=1}^{T,N}$ be the associated policies. Suppose there exist Lagrange multipliers $\{\lambdavec, \muvec, \gammavec, \etavec, \psivec\}$ satisfying \eqref{eq:KKT} and there exists an $\epsilon>0$ such that, 
    for all $i\in\mathbf{I}_1^N$, $t\in \mathbf{I}_0^T$, and nonzero $\{\Delta x_{T+1}\}\bigcup  \{\Delta x_\tau, \allowbreak \Delta u_\tau\}_{\tau=t}^T$ satisfying
    \begin{equation}\label{eq:sufficient condition critical cone}
        \begin{aligned}
            &0=\Delta u_t^j - \nabla \pi_t^j(x_t^*,u_t^{1:(j-1)*})\begin{bmatrix}\Delta x_t \\ \Delta u_t^{1:j-1} \end{bmatrix},\forall j\in \mathbf{I}_i^N\\
            &0=\Delta u_\tau^j - \nabla \pi_\tau^j(x_\tau^*,u_\tau^{1:(j-1)*})\begin{bmatrix}\Delta x_\tau \\ \Delta u_\tau^{1:j-1} \end{bmatrix},\forall j\in \mathbf{I}_1^N,\forall \tau\in\mathbf{I}_{t+1}^T\\
            &0= \Delta x_{\tau+1} - \nabla f_\tau(x_\tau^*, u_\tau^*) \begin{bmatrix}
                \Delta x_{\tau} \\ \Delta u_\tau 
            \end{bmatrix},\forall \tau\in \mathbf{I}_{t}^T \\
            &0 = \nabla h_\tau^j(x_\tau^*,u_\tau^*) \begin{bmatrix}
                \Delta x_\tau \\ \Delta u_\tau
            \end{bmatrix},0 = \nabla h_{T+1}^j(x_{T+1}^*) \Delta x_{T+1},\forall \tau \in \mathbf{I}_0^T, \forall j \in \mathbf{I}_1^N \\ 
        \end{aligned}
    \end{equation}
    we have 
    \begin{equation}
    \sum_{\tau=t}^{T} \begin{bmatrix}
                \Delta x_\tau \\ \Delta u_\tau^i
            \end{bmatrix}^\top \nabla^2_{[x_\tau^*,u_\tau^{i*}]} L_\tau^{i}\begin{bmatrix}
                \Delta x_\tau \\ \Delta u_\tau^i
            \end{bmatrix} + \Delta x_{T+1}^\top \nabla^2_{[x_{T+1}^*]} L_T^{i} \Delta x_{T+1}
            >0.
    \end{equation}
    Then, 
    $(\xvec^*,\uvec^*)$ constitutes a local feedback Stackelberg equilibrium trajectory.
\end{theorem}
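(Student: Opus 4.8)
The plan is to certify the local feedback Stackelberg property stage-by-stage and player-by-player, reducing each inequality required by Definition~\ref{def:local fse policy} to a classical second-order sufficient condition for the nested program \eqref{eq:player's constrained problem}. Fix $t\in\mathbf{I}_0^T$ and $i\in\mathbf{I}_1^N$. First I would invoke Theorem~\ref{thm:nested constrained optimization problem} together with the consistency between the value functions \eqref{eq:value function} and the nested optimization (i.e.\ sub-game perfection / the principle of optimality): evaluating $Z_t^i(x_t^*,u_t^{1:(i-1)*},\tilde{u}_t^i)$ as in \eqref{eq:Z_t^i} amounts to holding player $i$'s action at $\tilde{u}_t^i$ and minimizing the objective of \eqref{eq:player's constrained problem} over the downstream variables $u_t^{i+1:N}$, $u_{t+1:T}^{1:N}$, $x_{t+1:T+1}$, with the other players following their policies. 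Hence, if the point associated with $(\xvec^*,\uvec^*)$ is a \emph{strict} local minimizer of \eqref{eq:player's constrained problem}, then for every $\tilde{u}_t^i$ near $u_t^{i*}$ the quantity $Z_t^i(x_t^*,u_t^{1:(i-1)*},\tilde{u}_t^i)$, being itself a (local) constrained minimum over the downstream variables, is bounded below by the overall nested optimum $Z_t^i(x_t^*,u_t^{1:(i-1)*},u_t^{i*})$. Establishing this for all $(t,i)$ reproduces exactly the chain of inequalities in Definition~\ref{def:local fse policy}.

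It therefore suffices to verify the classical second-order sufficient condition for \eqref{eq:player's constrained problem}. I would identify $\mathcal{L}_t^i$ as its Lagrangian, recalling that the redundant $\psi_\tau$-terms for $\tau>t$ were deliberately dropped so that $\mathcal{L}_t^i=\sum_{\tau=t}^{T-1}L_\tau^i+L_T^i$, with the terminal curvature entering through the $L_{T+1}^i$ contribution in $V_{T+1}^i$. The multipliers $\{\lambdavec,\muvec,\gammavec,\etavec,\psivec\}$ supplied by hypothesis supply first-order stationarity and complementarity via \eqref{eq:KKT}. Consistent with the strict complementarity invoked in Theorem~\ref{thm:necessary condition}, I would read the critical cone as the linearized active constraints: the linearized feedback-interaction constraints \eqref{eq:subsequent player's strategy} and \eqref{eq:future player's strategy}, the linearized dynamics, and the linearized equality constraints, with the active-inequality linearizations $\nabla g_\tau^j[\Delta x_\tau;\Delta u_\tau]=0$ appended to \eqref{eq:sufficient condition critical cone}. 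The positive-definiteness hypothesis on the stated quadratic form then plays the role of the curvature condition, and the standard second-order sufficient condition for nonlinear programs certifies $(\xvec^*,\uvec^*)$ as a strict local minimizer of \eqref{eq:player's constrained problem}.

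The hard part will be showing that the Hessian of $\mathcal{L}_t^i$ restricted to the critical cone coincides with the stated quadratic form $\sum_{\tau=t}^{T}[\Delta x_\tau;\Delta u_\tau^i]^\top\nabla^2_{[x_\tau^*,u_\tau^{i*}]}L_\tau^i[\Delta x_\tau;\Delta u_\tau^i]+\Delta x_{T+1}^\top\nabla^2_{x_{T+1}^*}L_{T+1}^i\Delta x_{T+1}$, which retains only the $[x_\tau,u_\tau^i]$ diagonal blocks. This needs two ingredients. The first is a telescoping argument that $\nabla^2\mathcal{L}_t^i$ separates across stages as a block-diagonal sum of $\nabla^2L_\tau^i$, which follows because each $L_\tau^i$ couples only $(x_{\tau:\tau+1},u_{\tau:\tau+1})$ and the feedback terms were arranged to avoid double counting. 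The second, and genuinely delicate, ingredient is the elimination of the dependent perturbations $\Delta u_\tau^j$ for $j\neq i$, which on the critical cone are pinned to $\Delta x_\tau$ and lower-indexed controls through the linearized policies $\nabla\pi_\tau^j$: I would show that, after this substitution, the cross-curvature and other-players'-control blocks contribute nothing beyond the $[x_\tau,u_\tau^i]$ blocks. The mechanism I expect to use is the stationarity rows $0=\nabla_{u_\tau^j}\mathcal{L}_t^i$ of \eqref{eq:KKT}, which fix $\etavec$ and $\psivec$ so that the first-order effect along the dependent directions vanishes, together with the bilinear structure of the feedback constraints $u_\tau^j-\pi_\tau^j$, whose policy-curvature contributions $\psi^{\top}\nabla^2\pi$ and $\eta^{\top}\nabla^2\pi$ are exactly the terms absorbed into the retained $\nabla^2 L_\tau^i$ blocks. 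Twice differentiability and compactness from Assumption~\ref{assumption:feasible} ensure all Hessians exist and that strict minimality persists on a uniform neighborhood, which closes the argument once the Hessian-matching identity is established.
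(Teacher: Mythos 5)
Your overall route is the same as the paper's: reduce the equilibrium property to strict local minimality of each nested program from Theorem~\ref{thm:nested constrained optimization problem}, observe that the set cut out by the linearized constraints \eqref{eq:sufficient condition critical cone} contains the critical cone (so positivity there is a stronger hypothesis than positivity on the cone itself), and invoke the classical second-order sufficient condition for constrained nonlinear programs (the paper cites Theorem~12.6 of \cite{nocedal1999numerical}). Your first two paragraphs make explicit two steps the paper leaves implicit --- the principle-of-optimality argument that strict local minimality of the nested program for each $(t,i)$ yields the chain of inequalities in Definition~\ref{def:local fse policy}, and the role of the active-inequality linearizations in the true critical cone --- and both are sound.

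The one place your proposal goes beyond the paper is also where it runs into trouble. You correctly identify that the stated curvature condition involves only the $[x_\tau,u_\tau^i]$ diagonal blocks of $\nabla^2 L_\tau^i$, whereas the classical second-order sufficient condition requires positivity of the full Hessian of $\mathcal{L}_t^i$ in all of $(x_{t:T+1},u_{t:T})$ --- including the other players' control blocks and the policy-curvature contributions $\psi_\tau^{i,j\top}\nabla^2\pi_\tau^j$ and $\eta_\tau^{i,j\top}\nabla^2\pi_{\tau+1}^j$ --- over the critical cone. But the mechanism you propose to close this gap does not work: the stationarity rows $0=\nabla_{u_\tau^j}\mathcal{L}_t^i$ are first-order conditions and cannot cancel second-order cross terms such as $\Delta u_\tau^{j\top}\,\nabla^2_{u_\tau^j u_\tau^k} L_\tau^i\,\Delta u_\tau^k$, which survive after substituting $\Delta u_\tau^j=\nabla\pi_\tau^j\,[\Delta x_\tau;\Delta u_\tau^{1:j-1}]$ on the critical cone. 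The paper's own proof does not attempt this identification at all; it applies the sufficient condition with the stated quadratic form directly, implicitly treating that form as the relevant restricted curvature. So either you should do the same, or a genuinely new argument is needed to show that the retained $[x_\tau,u_\tau^i]$ blocks control the omitted ones on the critical cone --- first-order stationarity alone will not supply it.
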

\begin{proof}
    The proof can be found in the Appendix.
\end{proof}
\begin{remark}
    The gap between the necessity condition in Theorem~\ref{thm:necessary condition} and the sufficiency condition in Theorem~\ref{thm:sufficient condition} is due to the fact that a solution to \eqref{eq:KKT} may not necessarily be a feedback Stackelberg equilibrium, and that there exist feedback Stackelberg equilibria where the cost functions possess zero second-order gradients. 
\end{remark}

Theorems~\ref{thm:necessary condition} and \ref{thm:sufficient condition} establish conditions to certify whether a trajectory $ (\xvec ,\uvec)$ constitutes a feedback Stackelberg equilibrium with a set of feedback Stackelberg policies $\{\pi_t^i\}_{t=0,i=1}^{T,N}$. 
However, computing feedback Stackelberg equilibria 
can be challenging. 
In the following sections, we will discuss how to approximately compute local feedback Stackelberg equilibria. We will first compute feedback Stackelberg equilibria for Linear Quadratic games and then extend the result to nonlinear games.

\section{Constrained Linear Quadratic Games}\label{sec:LQ games}

We consider the linear dynamics
\begin{equation}\label{eq:linear dynamics}
    x_{t+1} =f_t(x_t,u_t)= A_t x_t + B_t^1 u_t^1 + \dots + B_t^N u_t^N+c_t,\ t\in \mathbf{I}_0^T,
\end{equation}
where $A_t\in\mathbb{R}^{n\times n}$, $B_t^i\in\mathbb{R}^{n\times {m_i}}$ and $c_t\in\mathbb{R}^{n}$. We denote by $B_t: = [B_t^1,B_t^2,\dots,B_t^N]$. The cost of the $i$-th player is defined as
\begin{equation}\label{eq:define lq game costs}
    \begin{aligned}
        \ell_t^i(x_t,u_t)=&\frac{1}{2}\begin{bmatrix}
            x_t \\ u_t
        \end{bmatrix}^\top \begin{bmatrix}
            Q_t^{i} & S_t^{i\top} \\ S_t^i & R_t^i
        \end{bmatrix} \begin{bmatrix}x_t \\ u_t \end{bmatrix} + q_t^{i\top}x_t + r_t^{i\top}u_t,\ t\in\mathbf{I}_0^T,\\
        \ell_{T+1}^i(x_{T+1})=& \frac{1}{2} x_{T+1}^\top Q_{T+1}^i x_{T+1} + q_{T+1}^{i\top} x_{T+1},
    \end{aligned}
\end{equation}
where symmetric matrices $Q_t^i\in\mathbb{R}^{n\times n}$ and $R_t^i\in\mathbb{R}^{m\times m}$ are positive semidefinite and positive definite, respectively. The off-diagonal matrix is denoted as $S_t^i\in\mathbb{R}^{m\times n}$. In particular, we partition the structure of $R_t^i$, $S_t^i$ and $r_t^i$ as follows
\begin{equation}
    R_t^i = \begin{bmatrix}
        R_t^{i,1,1} & R_t^{i,1,2} & \cdots & R_t^{i,1,N} \\ 
        R_t^{i,2,1} & R_t^{i,2,2} & \cdots & R_t^{i,2,N} \\ 
        \vdots & \vdots & \ddots & \vdots\\ 
        R_t^{i,N,1} & R_t^{i,N,2} & \cdots & R_t^{i,N,N}
    \end{bmatrix}, S_t^{i}=\begin{bmatrix}
        S_t^{i,1} \\ S_t^{i,2} \\ \vdots \\  S_t^{i,N} 
    \end{bmatrix}, r_t^i = \begin{bmatrix}
        r_t^{i,1} \\ r_t^{i,2} \\ \vdots \\ r_t^{i,N}
    \end{bmatrix},
\end{equation}
where $R_t^{i,j,k}$, $S_t^{i,j}$ and $r_t^i$ represent the cost terms $u_t^{j\top} R_t^{i,j,k} u_t^{k}$, $u_t^{j\top} S_t^{i,j} x_t$ and $r_t^{i,j^\top} u_t^j$ in $\ell_t^i(x_t,u_t)$. The linear equality and inequality constraints are specified as,
\begin{equation}\label{eq:define lq game constraints}
\begin{aligned}
    &0 = h_t^i(x_t,u_t) = H_{x_t}^ix_t +\sum_{j\in\mathbf{I}_1^N} H_{u_t^j}^i u_t^j + \bar{h}_t^i, &&t \in \mathbf{I}_0^T\\ 
    &0\le g_t^i(x_t,u_t) = G_{x_t}^ix_t + \sum_{j\in\mathbf{I}_1^N} G_{u_t^j}^i u_t^j + \bar{g}_t^i, && t \in \mathbf{I}_0^T\\
    &0 = h_{T+1}^i(x_{T+1}) = H_{x_{T+1}}^ix_{T+1} + \bar{h}_{T+1}^i,\\ 
    &0 \le g_{T+1}^i(x_{T+1})=G_{x_{T+1}}^i x_{T+1} + \bar{g}_{T+1}^i.
\end{aligned}
\end{equation}
\subsection{Computing Feedback Stackelberg Equilibria and Constructing the KKT Conditions for LQ Games}
In this subsection, we introduce a process for deriving FSE and the KKT conditions for LQ games. 
When we have linear inequality constraints, the optimal policies of LQ games are generally piecewise linear functions of the state \cite{besselmann2012explicit,laine2023computation}. 
However, this makes them non-differentiable at the facets. 
In our work, we propose to use the primal-dual interior point (PDIP) method \cite{nocedal1999numerical} to solve constrained LQ games. The benefits of using PDIP are its polynomial complexity and tolerance of infeasible initializations. Critically, under certain conditions, PDIP yields a local differentiable policy approximation to the ground truth piecewise linear policy, as shown in the rest of this section and an example in Appendix~\ref{subsec:appendix quasi policy gradient}. 

    

To this end, we introduce a set of non-negative slack variables $\{s_t^i\}_{t=0,i=1}^{T+1,N}$ such that we can rewrite the inequality constraints as equality constraints for $t\in \mathbf{I}_0^{T+1}$ and $i\in\mathbf{I}_1^N$,
\begin{equation}
    \begin{aligned}
        g_t^i(x_t,u_t)-s_t^i=0, \ g_{T+1}^i(x_{T+1})-s_{T+1}^i = 0.
    \end{aligned}
\end{equation}
In this paper, we consider PDIP as a homotopy method as in \cite{nocedal1999numerical}. Instead of solving the mixed complementarity problem \eqref{eq:KKT} directly, we seek solutions to the homotopy approximation of the complementary slackness condition
\begin{equation}\label{eq:homotopy}
    \gamma_t^{i}\odot s_t^i = \rho \mathbf{1}, \ s_t^i\ge 0, \ \gamma_t^i\ge 0
\end{equation}
where $\odot$ denotes the elementwise product and $\rho>0$ is a hyper-parameter to be reduced to 0 gradually such that we recover the ground truth solution when $\rho\to 0$. In the following section, we will construct the KKT conditions where we replace the mixed complementarity condition with its approximation \eqref{eq:homotopy}. For each $\rho>0$, we denote its corresponding local feedback policy as $\{\pitrhoi\}_{t=0,i=1}^{T,N}$, if it exists.

As shown in Theorem~\ref{thm:necessary condition}, the construction of the KKT conditions for player $i$ at stage $t$ requires the policy gradients of subsequent players at the current stage and future stages. In what follows, we construct those KKT conditions in reverse player order and backward in time. 


\subsubsection{Player \texorpdfstring{$N$}{N} at the \texorpdfstring{$T$}{T}-th stage}\label{sec:N,T,KKT}
Before constructing the KKT conditions, we first introduce the variables of player $N$ at the terminal time $T$, $\zvec_{T}^N: = [u_T^N,\lambda_T^N, \mu_{T:T+1}^N,\gamma_{T:T+1}^N,\allowbreak s_{T:T+1}^N,x_{T+1}]$. As shown in Theorem~\ref{thm:necessary condition}, the KKT conditions of player $N$ at time $T$ can be written as
\begin{equation}\label{eq:T,N,KKT}
    0=\KTNrho(\zvec_T^N): = \begin{bmatrix}
    \nabla_{u_T^N}L_{T}^N   \\
    \nabla_{x_{T+1}}L_{T}^N \\
    x_{T+1} - f_T(x_T,u_T) \\
    h_T^N(x_T,u_T)\\
    h_{T+1}^N(x_{T+1}) \\
    g_T^N(x_T,u_T) - s_{T}^N\\
    g_{T+1}^N(x_{T+1})-s_{T+1}^N\\
    \gamma_{T:T+1}^N\odot s_{T:T+1}^N - \rho \mathbf{1} 
\end{bmatrix},
\end{equation}
where the rows of $\KTNrho(\zvec_T^N)$ represent the stationarity conditions with respect to $u_T^N$ and $x_{T+1}$, dynamics constraint, equality constraints, inequality constraints, and relaxed complementarity conditions. To obtain a local policy and its policy gradient around a $\zvec_T^N$ satisfying \eqref{eq:T,N,KKT}, we build a first-order approximation to \eqref{eq:T,N,KKT},
\begin{equation}\label{eq:T,N-th player}
\nabla \KTNrho \cdot \Delta \zvec_T^N + \nabla_{[x_T,u_T^{1:N-1}]}\KTNrho  \cdot \begin{bmatrix}
    \Delta x_T\\ \Delta u_T^{1:N-1}
\end{bmatrix} + \KTNrho(\zvec_T^N) = 0.  
\end{equation}
If there is no solution $\Delta \zvec_T^N$ to \eqref{eq:T,N-th player}, then we claim there is no feedback Stackelberg policy. Suppose \eqref{eq:T,N-th player} has a solution $\Delta \zvec_T^N$, then we can define $\Delta \zvec_T^N$ as 
\begin{equation}\label{eq:Delta z_T^N}
    \begin{aligned}
        \Delta \zvec_T^N=\underbrace{-\big(\nabla \KTNrho\big)^{+}  \cdot \Big( \nabla_{[x_T,u_T^{1:N-1}]} \KTNrho\cdot \begin{bmatrix}
    \Delta x_T \\ \Delta u_T^{1:N-1}
\end{bmatrix} +\KTNrho(\zvec_T^N)\Big)}_{F_T^N(\Delta x_T, \Delta u_T^{1:N-1})},
    \end{aligned}
\end{equation}
where $(\cdot)^+$ represents the pseudo-inverse and we denote $\Delta \zvec_T^N$ as a function $F_T^N$ of $(\Delta x_T, \Delta u_T^{1:N-1})$.
Since $\Delta u_T^N$ represents the first $m_N$ entries of $\Delta \zvec_T^N$, we consider $\Delta u_T^N$ as a function of $(\Delta x_T,\Delta u_T^{1:N-1})$,
\begin{equation}\label{eq:T,N,u}
\begin{aligned}
\Delta u_T^N=& -\big[\big(\nabla \KTNrho\big)^{+}  \big]_{u_T^N} \cdot \Big( \nabla_{[x_T,u_T^{1:N-1}]} \KTNrho\cdot  \begin{bmatrix}
    \Delta x_T \\ \Delta u_T^{1:N-1}
\end{bmatrix} +\KTNrho(\zvec_T^N)\Big)   , \\
\end{aligned}
\end{equation}
where $\big[\big(\nabla \KTNrho\big)^{+}  \big]_{u_T^N}$ represents the rows of the matrix $\big(\nabla \KTNrho\big)^{+}$ corresponding to the variable $u_T^N$, i.e., the first $m_N$ rows of the matrix $\big(\nabla \KTNrho\big)^{+}$.

Furthermore, for some $x\in\mathbb{R}^n$ and $u^{1:N-1} \in\mathbb{R}^{\sum_{i=1}^{N-1}m_i}$, let $\Delta x_T = x - x_T$, $\Delta u_T^{1:N-1} = u^{1:N-1} - u_T^{1:N-1}$ and $\Delta u_T^N = u^N - u_T^N$. Substituting them into \eqref{eq:T,N,u}, we obtain a local policy $\localpiTrhoN$ for player $N$ at time $T$,
\begin{equation}\label{eq:T,N,construct policy}
\begin{aligned}
    u^N =& \localpiTrhoN(x,u^{1:N-1}) \\
    :=& u_T^N -\big[\big(\nabla \KTNrho\big)^{+}  \big]_{u_T^N} \cdot \Big( \nabla_{[x_T,u_T^{1:N-1}]} \KTNrho \cdot \begin{bmatrix}
    x - x_T \\ u^{1:N-1} - u_T^{1:N-1}
\end{bmatrix} +\KTNrho(\zvec_T^N)\Big) .
\end{aligned}
\end{equation}
Suppose that $\nabla  \KTNrho(\zvec_T^N)$ has a constant row rank in an open set containing $\zvec_T^N$, then, by the constant rank theorem \cite{janin1984directional}, the policy $\localpiTrhoN$ of player $N$ at time $T$ is locally differentiable with respect to $(x,u^{1:N-1})$, and its gradient over $(x,u^{1:N-1})$ is
\begin{equation}\label{eq:T,N,policy gradient}
\begin{aligned}
\begin{aligned}
\nabla \localpiTrhoN  = & -\big[\big(\nabla \KTNrho\big)^{+}  \big]_{u_T^N} \cdot \nabla_{[x_T,u_T^{1:N-1}]} \KTNrho.
\end{aligned}
\end{aligned}
\end{equation}
In the following subsection, we construct the KKT conditions of a player $i<N$ at stage $T$. 


\subsubsection{Players \texorpdfstring{$i<N$}{i<N} at the \texorpdfstring{$T$}{T}-th stage}
For player $i < N$, assuming that $\zvec_T^{i+1}$ has been defined and $\nabla \tilde{\pi}_{T,\rho}^{i+1}$ has been computed, we first introduce variables 
\begin{equation}\label{eq:define z_T^i by y_T^i}
    \yvec_T^i: = [u_T^i, \psi_T^{i}, \lambda_T^i,  \mu_{T:T+1}^i,\allowbreak\gamma_{T:T+1}^i, s_{T:T+1}^i ] \textrm{ and } \zvec_T^i:=[\yvec_T^i,\zvec_T^{i+1}].
\end{equation}
The KKT conditions of player $i$ at time $T$ is
\begin{equation}\label{eq:T,i-th KKT}
    0=\KTirho(\zvec_T^{i}):=\begin{bmatrix}
    \hatKTirho(\yvec_T^i) \\
    \KTnextirho(\zvec_T^{i+1})
\end{bmatrix},\ \hatKTirho(\yvec_T^i):=\begin{bmatrix}
    \nabla_{u_T^i}L_{T}^i   \\
    \nabla_{x_{T+1}}L_{T}^i \\
    \nabla_{u_T^j} L_T^i, \forall j\in \mathbf{I}_{i+1}^N\\
    h_T^i(x_T,u_T)\\
    h_{T+1}^i(x_{T+1}) \\
    g_T^i(x_T,u_T) - s_{T}^i\\
    g_{T+1}^i(x_{T+1})-s_{T+1}^i\\
    \gamma_{T:T+1}^i\odot s_{T:T+1}^i - \rho \mathbf{1}
\end{bmatrix},
\end{equation}
where the definition of $L_T^i$ involves the policy $\piTrhoinext$, as shown in \eqref{eq:Lagrangian of player i, time T}. Building a first-order approximation to $0=\KTirho(\zvec_T^i)$, we have
\begin{equation}\label{eq:T,i,Newton}
    \nabla \KTirho \cdot \Delta \zvec_T^i + \nabla_{[x_T, u_T^{1:i-1}]} \KTirho \cdot \begin{bmatrix}
        \Delta x_T \\ \Delta u_T^{1:i-1}
    \end{bmatrix} + \KTirho(\zvec_T^i) = 0.
\end{equation}

However, a drawback of PDIP is that the policy $\piTrhoinext$ is nonlinear in state $x_T$ and prior players' controls $u_T^{1:i}$, as shown in a simplified problem in Appendix~\ref{subsec:appendix quasi policy gradient}. The computation of $\nabla \KTirho$ involves the evaluation of $\nabla(\nabla_{u_T^i} L_T^i)$, which requires the computation of $\nabla (\psi_T^{i,i+1} \nabla \pi_{T,\rho}^{i+1}) = \nabla \psi_T^{i,i+1} \cdot \nabla \pi_{T,\rho}^{i+1} + \psi_T^{i,i+1} \cdot \nabla^2 \pi_{T,\rho}^{i+1}$. Furthermore, to evaluate $\nabla^2 \pi_{T,\rho}^{i+1}$, we need the computation of $\nabla^3 \pi_{T,\rho}^{i+2}$. In other words, the construction of $\nabla \KTirho$ needs the evaluation of $\nabla^2 \pi_{T,\rho}^{i+1}$, $\nabla^3 \pi_{T,\rho}^{i+2}$, ..., and $\nabla^{N-i+1} \pi_{T,\rho}^{N}$. The evaluation of high-order policy gradients is challenging in practice \cite{laine2023computation} because there is no closed-form solution to the KKT equation $0=\KTnextirho(\zvec_T^{i+1})$. 

We prove in Appendix~\ref{subsec:appendix quasi policy gradient} that the high-order policy gradients could decay to zero as $\rho \to0$, when the ground truth policy is piecewise linear and differentiable around $x_T$. Motivated by this observation, we propose to approximate the nonlinear policy $\pi_{T,\rho}^{i+1}$ by its first-order approximation $\tilde{\pi}_{T,\rho}^{i+1}$ in \eqref{eq:T,N,construct policy}. With this approximation, we have $\nabla (\psi_T^{i,i+1} \nabla \tilde{\pi}_{T,\rho}^{i+1}) = \nabla \psi_T^{i,i+1} \cdot \nabla \tilde{\pi}_{T,\rho}^{i+1}$. We refer to this policy $\tilde{\pi}_{T,\rho}^{i+1}$ as a \emph{\textbf{\quasi}}.


In the remainder of this section, we will always approximate the ground truth nonlinear policy by \quasi{} when we define the KKT conditions. 


Solving equation \eqref{eq:T,i,Newton}, we can obtain $\Delta \zvec_T^i$ and $\nabla \localpiTrhoi$ as in \eqref{eq:Delta z_T^N} and \eqref{eq:T,N,policy gradient}, respectively. However, by construction, the dimension of $\Delta \zvec_T^i$ is higher than $\Delta \zvec_T^{i+1}$. Therefore, it is more expensive to compute $(\nabla \KTirho)^+$ than $(\nabla \KTnextirho)^+$, and it is worthwhile to reduce the complexity of computing $\Delta \zvec_T^i$ by leveraging the computation that we have done for $\Delta \zvec_T^{i+1}$ and $\nabla \localpiTrho^{i+1}$. To this end, by exploiting the structure $\zvec_T^i=[\yvec_T^i, \zvec_T^{i+1}]$ in \eqref{eq:define z_T^i by y_T^i}, we can rewrite \eqref{eq:T,i,Newton} as, 
\begin{equation}\label{eq:T,i,decompose}
    \left\{\begin{aligned}
        &\nabla \hatKTirho \cdot \Delta \yvec_T^i + \nabla_{[x_T,u_T^{1:i-1}]} \hatKTirho \cdot \begin{bmatrix}
            \Delta x_T \\ \Delta u_T^{1:i-1}
        \end{bmatrix} + \hatKTirho(\yvec_T^i) = 0\\
        &\nabla \KTnextirho  \cdot \Delta \zvec_T^{i+1} + \nabla_{[x_T,u_T^{1:i}]} \KTnextirho \cdot \begin{bmatrix}
            \Delta x_T \\ \Delta u_T^{1:i}
        \end{bmatrix} + \KTnextirho(\zvec_T^{i+1}) =0
    \end{aligned}\right.
\end{equation}
Observe that we have solved the second equation of \eqref{eq:T,i,decompose} in Section \ref{sec:N,T,KKT}. What remains to be solved is the first equation in \eqref{eq:T,i,decompose}. We solve it as follows, 
\begin{equation}\label{eq:Delta y T i}
\begin{aligned}
    \Delta \yvec_T^i =& \underbrace{- \big( \nabla \hatKTirho  \big)^+ \cdot \Big(\nabla_{[x_T, u_T^{1:i-1}]} \hatKTirho  \cdot \begin{bmatrix}
        \Delta x_T \\ \Delta u_T^{1:i-1}
    \end{bmatrix} + \hatKTirho (\yvec_T^i)\Big)}_{\hat{F}_T^i (\Delta x_T, \Delta u_T^{1:i-1})},\\
    \Delta u_T^i =& - \big[\big( \nabla \hatKTirho \big)^+\big]_{u_T^i} \cdot \Big(\nabla_{[x_T, u_T^{1:i-1}]} \hatKTirho \cdot \begin{bmatrix}
        \Delta x_T \\ \Delta u_T^{1:i-1}
    \end{bmatrix} + \hatKTirho (\yvec_T^i)\Big), \\
    \nabla \localpiTrhoi=& -\big[\big(\nabla \hatKTirho\big)^{+}  \big]_{u_T^i} \cdot \nabla_{[x_T,u_T^{1:i-1}]} \hatKTirho.
\end{aligned}
    \end{equation}
Combining \eqref{eq:Delta y T i} and \eqref{eq:Delta z_T^N}, we have
\begin{equation}\label{eq:Delta Z_T^i expression}
    \begin{aligned}
        \Delta \zvec_T^i =\begin{bmatrix}
            \Delta \yvec_T^i \\ \Delta \zvec_T^{i+1} 
        \end{bmatrix}= \begin{bmatrix}
            \hat{F}_T^i(\Delta x_T, \Delta u_T^{1:i-1})\\
            F_T^{i+1}(\Delta x_T, \Delta u_T^{1:i})
        \end{bmatrix}.
    \end{aligned}
\end{equation}
Since $\Delta u_T^i$ is also a function of $(\Delta x_T,\Delta u_T^{1:i-1})$, as shown in \eqref{eq:Delta y T i}, we can represent \eqref{eq:Delta Z_T^i expression} compactly as $\Delta \zvec_T^i = F_T^i(\Delta x_T, \Delta u_T^{1:i-1})$.

As such, given that the KKT conditions of player $(i+1)$ at time $T$ have been constructed, we have finished the construction of the KKT conditions for player $i$ 
at time $T$, and we introduced a computationally efficient way to compute $\nabla \localpiTrhoi$. 
We can derive the KKT conditions and \quasigradient{} of player $i<N$ at time $T$, sequentially, from $i = N-1$ to $i=1$. 

\subsubsection{Player \texorpdfstring{$N$}{N} at a stage \texorpdfstring{$t<T$}{t<T}}
At a stage $t<T$, assuming that we have constructed the KKT conditions $0 = \Knexttonerho(\zvec_{t+1}^1)$, 
we are ready to derive the KKT conditions for player $N$ at time $t$. We first introduce the variable $\zvec_t^N:=[\yvec_t^N,\zvec_{t+1}^1]$, with $\yvec_t^N := [u_t^N,\eta_{t}^N, \lambda_t^N,\mu_t^N, \gamma_t^N,\allowbreak s_t^N,x_{t+1}]$. We construct the KKT conditions of player $N$ at time $t$ as follows,
\begin{equation}
    0=\KtNrho (\zvec_t^N): = \begin{bmatrix}
        \nabla_{u_t^N} L_t^N \\
        \nabla_{x_{t+1}}L_{t}^N \\
        \nabla_{u_{t+1}^j} L_t^N,\forall j\in \mathbf{I}_1^{N-1}  \\
        x_{t+1} - f_t(x_t,u_t)\\
        h_t^N(x_t,u_t)\\
        g_t^N(x_t,u_t) - s_t^N\\
        \gamma_{t}^N\odot s_{t}^N - \rho \mathbf{1}\\
        \Knexttonerho (\zvec_{t+1}^1)
    \end{bmatrix}.
\end{equation}
Building a first-order approximation to the above equation, 
we can obtain \quasigradient{} $\nabla \localpitrhoN$ as in \eqref{eq:Delta y T i} when it exists. 
\subsubsection{Players \texorpdfstring{$i<N$}{i<N} at a stage \texorpdfstring{$t<T$}{t<T}}
Suppose that we have constructed the KKT conditions for the $(i+1)$-th player at the $t$-th stage, we are then ready to construct the KKT conditions for player $i$ at the $t$-th stage. We introduce the variable $\zvec_t^i: = [\yvec_t^i,\zvec_t^{i+1}]$ with $\yvec_t^i:=[u_t^i,\psi_t^i, \eta_t^i,\lambda_t^i,\mu_t^i,\gamma_t^i,s_t^i]$. The KKT conditions of player $i$ at time $t$ is
\begin{equation}
    0= \Ktirho (\zvec_t^i):= \begin{bmatrix}
        \nabla_{u_t^i} L_t^i\\
        \nabla_{x_{t+1}} L_{t}^i \\
        \nabla_{u_t^j} L_t^i ,\forall j \in \mathbf{I}_{i+1}^N\\
        \nabla_{u_{t+1}^j} L_t^i ,\forall j \in \mathbf{I}_{1}^N\setminus \{i\}\\
        h_t^i(x_t,u_t)\\
        g_t^i(x_t,u_t) - s_t^i\\
        \gamma_t^i\odot s_t^i - \rho \mathbf{1}\\
        \Ktnextirho (\zvec_t^{i+1})
    \end{bmatrix}.
\end{equation}
Building a first approximation to the above equation,
we can obtain the \quasigradient{} $\nabla \localpitrhoi$ as in \eqref{eq:Delta y T i}, when it exists. 

We observe that, by construction, the KKT conditions in \eqref{eq:KKT} is equivalent to $0=\Kzeroonerho(\zvec_0^1)$. To simplify notation, we define 
\begin{equation}
    \zvec: = \zvec_0^1, \ \ \Krho(\zvec): = \Kzeroonerho(\zvec).    
\end{equation}
The KKT conditions \eqref{eq:KKT} can be represented compactly as $0=\Krho(\zvec)$. To more effectively illustrate the construction process of KKT conditions described above, we have included detailed examples of the KKT conditions for two-player LQ games in Appendix~\ref{appendix:KKT} as a reference. 

\begin{algorithm}[t]
\caption{Local Feedback Stackelberg Equilibrium via PDIP}
\begin{algorithmic}[1]\label{alg:pdip LQ}
\REQUIRE 
$\{f_t\}_{t=0}^T$, $\{\ell_t^i,h_t^i,g_t^i\}_{t=0,i=1}^{T+1,N}$, initial homotopy parameter $\rho$, contraction rate $\sigma\in(0,1)$, parameters $\beta\in (0,1)$ and $\kappa\in(0,1)$, tolerance $\epsilon$, initial solution $\zvecrho^{(0)}:=[\xvec_\rho^{(0)}, \uvec_\rho^{(0)}, \lambdavec_\rho^{(0)}, \muvec_\rho^{(0)},  \gammavec_\rho^{(0)}, \etavec_\rho^{(0)}, \psivec_\rho^{(0)},\svec_\rho^{(0)}]$ with $\svec_\rho^{(0)}>0$ and $\gammavec_\rho^{(0)}>0$
\ENSURE policies $\{\localpitrhoi\}_{t=0,i=1}^{T,N}$, converged solution $\zvecrho$
\FOR{$k^{\mathrm{out}} = 1,2,\dots,k_{\mathrm{max}}^{\mathrm{out}}$}
\WHILE{the merit function $\|\Krho(\zvecrho^{(k)})\|_2>\epsilon$}
\STATE construct the first-order approximation of the KKT conditions \\$0 =  \nabla \Krho \cdot \Delta \zvecrho+\Krho(\zvecrho^{(k)}) $\label{alg_step:local LQ}
\STATE $\Delta \zvecrho \gets -\big(\nabla \Krho\big)^+\cdot \Krho(\zvecrho^{(k)})$
\STATE initialize the step size for line search, $\alpha \gets 1$
\WHILE{$\|\Krho(\zvecrho^{(k)} + \alpha \Delta \zvecrho)\|_2 > \kappa \|\Krho(\zvecrho^{(k)})\|_2  $ or $\hat{\zvec}_\rho:=(\zvecrho^{(k)} + \alpha \Delta \zvecrho)$ has a non-positive element in its sub-vector $[\hat{\svec}_\rho,\hat{\gammavec}_\rho]$}\label{alg_step:evaluate Krho}
\STATE $\alpha \gets \beta \cdot \alpha$
\ENDWHILE
\IF{$\alpha==0$}
\STATE claim \textbf{failure} to find a feedback Stackelberg equilibrium
\ENDIF
\STATE $\zvecrho^{(k+1)}\gets \zvecrho^{(k)}+\alpha \Delta \zvecrho$
\ENDWHILE
\STATE  $\rho \gets \sigma \cdot \rho$
\ENDFOR
\STATE construct $\{\localpitrhoi\}_{t=0,i=1}^{T,N}$ as in \eqref{eq:T,N,construct policy} and record $\zvecrho \gets \zvecrho^{(k)}$. 
\RETURN $\{\localpitrhoi\}_{t=0,i=1}^{T,N}$, $\zvecrho$
\end{algorithmic}
\end{algorithm}



\subsection{Primal-Dual Interior Point Algorithm and Convergence Analysis in Constrained LQ Games}
In this subsection, we propose the application of Newton's method to compute $\zvec^*=[\xvec^*, \uvec^*,\lambdavec^*, \muvec^*, \gammavec^*, \etavec^*,\allowbreak \psivec^*, \svec^*]$, ensuring $0 = \Krho(\zvec^*)$. This approach guarantees that the associated \quasipolicies{} form a set of local FSE policies, provided that we anneal the parameter $\rho$ to zero and the sufficient condition in Theorem~\ref{thm:sufficient condition} is satisfied. We formalize our method in Algorithm~\ref{alg:pdip LQ}.

In Algorithm~\ref{alg:pdip LQ}, we gradually decay the homotopy parameter $\rho$ to zero such that $\lim_{\rho\to 0}\zvecrho$ recovers an FSE solution. 
For each $\rho$, at the $k$-th iteration, we first construct the KKT conditions $0=\Krho(\zvec)$ along the trajectory $\zvecrho^{(k)}$. We compute the Newton update direction $\Delta \zvec :=- (\nabla \Krho)^+\cdot \Krho(\zvecrho^{(k)})$. Since we aim at finding a solution $\zvec^*$ to $0=\Krho(\zvec^*)$, a natural choice of merit function is $\|\Krho(\zvec)\|_2$. Given this choice of the merit function, we perform a line search to determine a step size $\alpha$ and update $\zvecrho^{(k+1)}=\zvecrho^{(k)} + \alpha \Delta \zvec$ until convergence. The converged solution is denoted as $\zvecrho^*$. Subsequently, we steadily decay $\rho$ and repeat these Newton update steps. We characterize how the magnitude of the KKT residual value $\|\Krho(\zvec)\|_2$ influences the convergence rate of Algorithm~\ref{alg:pdip LQ} when solving LQ games in the following result.

\begin{theorem}\label{thm:PDIP convergence}
    Under Assumption~\ref{assumption:feasible}, let $\F_z: = \{\zvec=[\xvec,\uvec,\lambdavec,\muvec,\gammavec, \etavec,\psivec,\svec ]:\gammavec \ge 0, \svec\ge 0\}$ be the solution set. We denote by $ \nabla\Krho(\zvec)$ and $\nablastar\Krho(\zvec)$ the Jacobians of the KKT conditions with and without considering \quasigradients, respectively. Suppose that $\nabla \Krho(\zvec)$ is invertible and there exist constants $D$ and $C$ such that
    \begin{subequations}
        \begin{align}
            & \|(\nabla \Krho(\zvec))^{-1}\|_2 \le D, &&\forall i\in\mathbf{I}_1^N,\forall \zvec \in \F_z, \label{eq:convergence pseudo inverse of K} \\
            & \|\nablastar \Krho(\zvec) - \nablastar \Krho(\tilde{\zvec})\|_2 \le C \|\zvec - \tilde{\zvec}\|_2, &&\forall i\in\mathbf{I}_1^N, \forall \zvec, \tilde{\zvec} \in \F_z. \label{eq:convergence Lipschitz} \vspace{-0.4em}
        \end{align}
    \end{subequations}
    Let $\alphaupper\in[0,1]$ be the maximum feasible stepsize for all $\zvec \in \mathcal{F}_\zvec$, i.e., $\alphaupper:= \max\{\alpha\in[0,1]:\zvec,\zvec + \alpha\Delta\zvec\in\F_z\}$. Moreover, suppose $\|\nablastar\Krho(\zvec) - \nabla \Krho(\zvec)\|_2\le \delta$ for all $\zvec\in\mathcal{F}_\zvec$ and $D \cdot \delta < 1$. Then, for all $\zvec \in \mathcal{F}_z$, 
    there exists $\alpha \in [0,\alphaupper]$ such that
    \begin{enumerate}
        \item if $\|\Krho(\zvec)\|_2> \frac{1- D \delta
        }{ D^2C \alphaupper}$, then $\|\Krho(\zvec + \alpha  \Delta \zvec )\|_2 \le \|\Krho(\zvec)\|_2 - \frac{(1-D\delta)^2}{2D^2C}$;
        \item if $\|\Krho(\zvec)\|_2 \le \frac{1- D \delta
        }{D^2C \alphaupper}$, then $            \|\Krho(\zvec + \alpha \Delta \zvec )\|_2 \le \big(1-\frac{1}{2}\alphaupper (1-D\delta)\big)
        \cdot \|\Krho(\zvec)\|_2$, and we have exponential convergence.
    \end{enumerate}
\end{theorem}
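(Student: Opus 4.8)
The plan is to treat this as a residual-based analysis of a damped Newton iteration, being careful to keep the two Jacobians in their distinct roles. The search direction is built from $\nabla\Krho$ and solves $\nabla\Krho(\zvec)\Delta\zvec = -\Krho(\zvec)$, so by \eqref{eq:convergence pseudo inverse of K} it obeys $\|\Delta\zvec\|_2 \le D\|\Krho(\zvec)\|_2$; the exact first-order expansion of the residual map along the update segment, however, is governed by $\nablastar\Krho$, which is Lipschitz by \eqref{eq:convergence Lipschitz}. First I would write the fundamental theorem of calculus along $s\mapsto\zvec+s\Delta\zvec$,
\[
\Krho(\zvec+\alpha\Delta\zvec) = \Krho(\zvec) + \int_0^\alpha \nablastar\Krho(\zvec+s\Delta\zvec)\,\Delta\zvec\,ds,
\]
and insert two telescoping splits: $\nablastar\Krho(\zvec+s\Delta\zvec)=\nablastar\Krho(\zvec)+[\nablastar\Krho(\zvec+s\Delta\zvec)-\nablastar\Krho(\zvec)]$, together with $\nablastar\Krho(\zvec)\Delta\zvec = \nabla\Krho(\zvec)\Delta\zvec + [\nablastar\Krho(\zvec)-\nabla\Krho(\zvec)]\Delta\zvec = -\Krho(\zvec)+[\nablastar\Krho(\zvec)-\nabla\Krho(\zvec)]\Delta\zvec$.

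Carrying out this algebra yields the identity
\[
\Krho(\zvec+\alpha\Delta\zvec) = (1-\alpha)\Krho(\zvec) + \alpha\big[\nablastar\Krho(\zvec)-\nabla\Krho(\zvec)\big]\Delta\zvec + \int_0^\alpha \big[\nablastar\Krho(\zvec+s\Delta\zvec)-\nablastar\Krho(\zvec)\big]\Delta\zvec\,ds.
\]
Taking norms, using $\alpha\le\alphaupper\le1$ so that $1-\alpha\ge0$, the mismatch bound $\|\nablastar\Krho-\nabla\Krho\|_2\le\delta$, the Lipschitz estimate $\|\nablastar\Krho(\zvec+s\Delta\zvec)-\nablastar\Krho(\zvec)\|_2\le Cs\|\Delta\zvec\|_2$ (whose integral contributes $\frac{1}{2}C\alpha^2\|\Delta\zvec\|_2^2$), and $\|\Delta\zvec\|_2\le D\|\Krho(\zvec)\|_2$, I expect the master per-step inequality, writing $r:=\|\Krho(\zvec)\|_2$,
\[
\|\Krho(\zvec+\alpha\Delta\zvec)\|_2 \;\le\; r - \alpha(1-D\delta)\,r + \tfrac{1}{2}CD^2\alpha^2 r^2 \;=:\; \phi(\alpha),
\]
valid for every $\alpha\in[0,\alphaupper]$.

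The remaining step is to minimize the convex quadratic $\phi$ over $[0,\alphaupper]$. Its unconstrained minimizer is $\bar\alpha = (1-D\delta)/(CD^2 r)$, and the threshold in the statement is precisely the value of $r$ at which $\bar\alpha=\alphaupper$. In the large-residual regime $\|\Krho(\zvec)\|_2 > (1-D\delta)/(D^2C\alphaupper)$ one has $\bar\alpha<\alphaupper$, so $\alpha=\bar\alpha$ is admissible and substitution gives $\phi(\bar\alpha) = r - (1-D\delta)^2/(2D^2C)$, the claimed fixed decrement. In the small-residual regime $\|\Krho(\zvec)\|_2\le(1-D\delta)/(D^2C\alphaupper)$ one has $\bar\alpha\ge\alphaupper$, so $\phi$ is nonincreasing on $[0,\alphaupper]$ and its minimum there is at $\alpha=\alphaupper$; substituting and using $r\le(1-D\delta)/(D^2C\alphaupper)$ to bound $\frac{1}{2}CD^2\alphaupper^2 r^2\le\frac{1}{2}\alphaupper(1-D\delta)r$ collapses $\phi(\alphaupper)$ to $\big(1-\frac{1}{2}\alphaupper(1-D\delta)\big)r$, and since $D\delta<1$ and $\alphaupper>0$ the contraction factor lies in $(0,1)$, giving exponential convergence.

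\textbf{The main obstacle} is getting the two-Jacobian bookkeeping right: the descent estimate closes only because the exact expansion is taken with respect to $\nablastar\Krho$ (hence the Lipschitz hypothesis is imposed on $\nablastar\Krho$, not on $\nabla\Krho$), while the search direction is generated by $\nabla\Krho$, and their discrepancy is absorbed through the single term $\alpha[\nablastar\Krho-\nabla\Krho]\Delta\zvec$ bounded by $\alpha\delta D r$; this is exactly what forces the factor $(1-D\delta)$ and the standing assumption $D\delta<1$. A secondary point requiring care is that the feasibility cap $\alphaupper$, needed to keep $\svec,\gammavec>0$ so that the iterate remains in $\F_z$, may truncate the ideal Newton decrement, which is precisely why the statement branches into two regimes rather than asserting pure quadratic convergence; I would also record the implicit nondegeneracy $\alphaupper>0$ and $C,D>0$ so that the threshold $(1-D\delta)/(D^2C\alphaupper)$ is well defined.
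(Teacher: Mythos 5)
Your proposal is correct and follows essentially the same route as the paper's proof: a fundamental-theorem-of-calculus expansion of the residual using $\nablastar\Krho$, absorption of the Jacobian mismatch through the $\alpha\delta D$ term to produce the factor $(1-D\delta)$, the Lipschitz remainder $\tfrac{1}{2}CD^2\alpha^2\|\Krho(\zvec)\|_2^2$, and minimization of the resulting quadratic in $\alpha$ over $[0,\alphaupper]$ with the same two-regime case split. The only differences are cosmetic (parametrizing the integral over $[0,\alpha]$ rather than $[0,1]$, and writing the exact telescoped identity before taking norms), and your closing remarks on the implicit nondegeneracy of $\alphaupper$, $C$, $D$ are a reasonable addition.
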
\vspace{-0.5em}

\begin{proof}
    The proof can be found in the Appendix. 
\end{proof}\vspace{-0.1em}
Theorem~\ref{thm:PDIP convergence} suggests that, under certain conditions, the merit function $\|K_\rho(\zvec)\|_2$ decays to zero exponentially fast, and Algorithm~\ref{alg:pdip LQ} converges to a solution satisfying the KKT conditions considering the \quasigradients. The above analysis can be considered as an extension of the classical PDIP convergence proof in \cite{boyd2004convex} to constrained feedback Stackelberg games where we consider feedback interaction constraints $0=u_t^i- \tilde{\pi}_{t,\rho}^i(x_t,u_t^{1:i-1})$ and the \quasigradients. The condition \eqref{eq:convergence pseudo inverse of K} equates to establishing a lower bound for the smallest nonzero singular value of $\nabla \Krho(\zvec)$. Practically, this can be achieved by adding a minor cost regularization term to the KKT conditions \cite{chinchilla2023newton}. Moreover, the constant $C$ in \eqref{eq:convergence Lipschitz} depends on the maximum singular values of the Hessians of costs, the Jacobian of constraints, and linear dynamics, which are all constant matricies in LQ games and can therefore be upper bounded. 

Given a $\rho>0$, a converged solution $\zvec_\rho^*$ renders $\Krho(\zvec_\rho^*) = 0$. Note that the KKT conditions $0=\Krho(\zvec_\rho^*)$ reduce to the one in Theorem \ref{thm:necessary condition} when $\rho$ decays to zero. 
As $\rho$ approaches zero, the solution $\zvec_\rho^*$, when converged, recovers a solution to the KKT conditions in Theorem~\ref{thm:necessary condition}. When the sufficient conditions in Theorem~\ref{thm:sufficient condition} are also satisfied, the computed solution converges to a local feedback Stackelberg equilibrium. 

\section{From LQ Games to Nonlinear Games}\label{sec:nonlinear games}
In this section, we extend our solution for LQ games to feedback Stackelberg games with nonlinear dynamics. Without loss of generality, each player could have non-quadratic costs. Coupled nonlinear equality and inequality constraints could also exist among players.

\subsection{Iteratively Approximating Nonlinear Games via LQ Games by Aligning Their KKT Conditions}In this subsection, we introduce a procedure which iteratively approximates the constrained nonlinear games using constrained LQ games, and computes approximate local feedback Stackelberg equilibria for the nonlinear games. These LQ game approximations are designed to ensure that the first-order approximations of their KKT conditions, expressed as $0=\nabla \Krho(\zvec)\cdot \Delta \zvec + \Krho(\zvec) $, align with those of the original nonlinear games, specifically considering the inclusion of \quasipolicies. Our approach differs from the existing iterative LQ game approximation techniques \cite{khan2023leadership,hu2023emergent} for FSE policies, which linearize the dynamics and quadraticize only the costs. In contrast, our method linearizes the dynamics but quadraticizes the Lagrangian. This enables us to utilize the convergence results for LQ games, as discussed in the previous section, to analyze the convergence properties of our method in nonlinear games. Consequently, our work provides the first iterative LQ game approximation approach that has provable convergence guarantees for constrained nonlinear feedback Stackelberg games.

In what follows, we introduce local LQ game approximations of the original nonlinear game. Let $\zvec$ be a solution in the set $\mathcal{F}_z$. We first define the following linear approximation of the dynamics and constraints around $\zvec$, for all $ t\in\mathbf{I}_0^T,i\in\mathbf{I}_1^N$,
\begin{equation}\label{eq:linearization}
    \begin{aligned}
        &A_{t}:=\nabla_{x_t}f_t(x_{t}, u_{t}),    &&B_{t}^i:=\nabla_{u_t^i}f_t(x_{t},u_{t}), &&\hspace{-0.2cm}c_{t}:=f_t(x_{t}, u_{t}) - x_{t+1},\\
        &H_{x_t}^i: = \nabla_{x_t} h_t^i,\  H^i_{u_t^j}:=\nabla_{u_t^j}h_t^i,  &&G_{x_t}^i:=\nabla_{x_t} g_t^i,\ G^i_{u_t^j}:=\nabla_{u_t^j}g_t^i, && \forall j\in\mathbf{I}_{1}^N,\\
        &\bar{h}_{t}^i:=h_t^i(x_{t},  u_{t}), && \bar{g}_{t}^i: = g_t^i(x_{t},u_{t}),\\
        & H_{x_{T+1}}^i: = \nabla_{x_{T+1}} h_{T+1}^i, && G_{x_{T+1}}^i:=\nabla_{x_{T+1}} g_{T+1}^i,\\
        & \bar{h}_{T+1}^i:= h_{T+1}^i(x_{T+1}), && \bar{g}_{T+1}^i := g_{T+1}^i(x_{T+1}).
    \end{aligned}
\end{equation}
For each $i\in\mathbf{I}_1^N$ and $t\in\mathbf{I}_0^T$, we represent the second order terms and cost-related terms in the Lagrangian $\mathcal{L}_t^i$ as quadratic costs \eqref{eq:define lq game costs}, with parameters defined as follows, 
\begin{equation}\label{eq:quadraticization}
\begin{aligned}
    Q_{t}^i:=&\nabla_{xx}^2\ell_t^i+(\nabla^2_{xx}f_t)^\top \lambda_{t}^i-(\nabla_{xx}^2h_t^i)^\top \mu_{t}^i - (\nabla_{xx}^2g_t^i)^\top \gamma_{t}^i,\\
    S_{t}^i:=&\nabla_{ux}^2\ell_t^i + (\nabla_{ux}^2f_t)^\top\lambda_{t}^i- (\nabla_{ux}^2h_t^i)^\top \mu_{t}^i - (\nabla_{ux}^2g_t^i)^\top \gamma_{t}^i,\\
    R_{t}^i:=& \nabla_{uu}^2\ell_t^i + (\nabla_{uu}^2 f_t)^\top \lambda_{t}^i - (\nabla_{uu}^2 h_t^i)^\top \mu_{t}^i- (\nabla_{uu}^2g_t^i)^\top \gamma_{t}^i,\\
    Q_{T+1}^i:=& \nabla_{xx}^2\ell_{T+1}^i - (\nabla_{xx}^2 h_{T+1}^i)^\top \mu_{T+1}^i - (\nabla_{xx}^2g_{T+1}^i)^\top \gamma_{T+1}^i,\\ 
    q_{t}^i:=& \nabla_x \ell_t^i, \hspace{1cm}
    r_{t}^i:= \nabla_u \ell_t^i  
,\hspace{1cm} q_{T+1}^i:= \nabla_x\ell_{T+1}^i.
\end{aligned}
\end{equation}
We can modify Algorithm~\ref{alg:pdip LQ} to address nonlinear games by applying an LQ game approximation around the solution $\zvecrho^{(k)}$ in step \ref{alg_step:local LQ} of Algorithm~\ref{alg:pdip LQ} and formulate the resulting approximate KKT conditions $0=\Krho(\zvecrho^{(k)})$ defined with terms in \eqref{eq:linearization} and \eqref{eq:quadraticization}. Furthermore, this LQ game approximation is reiterated around $\zvecrho^{(k)}+\alpha \Delta \zvecrho$ in step \ref{alg_step:evaluate Krho}, when we evaluate the merit function $\|\Krho(\zvecrho^{(k)}+\alpha \Delta \zvecrho)\|_2$ during line search.

\subsection{Quasi-Policies Approximation Error and Exponential Convergence Analysis in Nonlinear Games}In the above solution procedure, we approximate the ground truth nonlinear policies of nonlinear games by \quasipolicies. However, different from LQ games, the ground truth feedback Stackelberg policies for nonlinear games could have nonzero high-order policy gradients. Thus, it is worthwhile to characterize the error caused by the \quasigradients. Essentially, there are two error sources. The first type of error is due to the fact that we have neglected high-order policy gradients when evaluating the KKT Jacobian $\nabla \Ktirho(\zvec)$, and the second form of error is how these changes propagate into the expression of KKT conditions $0=\Ktirho(\zvec)$ for earlier players and stages. Suppose those two error sources could be upper bounded; then, we can characterize their impact on the policy gradients error in the following proposition.
\begin{proposition}\label{prop:Quasi policy}
    Under Assumption \ref{assumption:feasible}, let $\zvec$ and $\zvectilde$ be two elements in the solution set $\mathcal{F}_z$. We denote by $\{\pitrhoi\}_{t=0,i=1}^{T,N}$ a set of policies around $\zvec$ and $\{\localpitrhoi\}_{t=0,i=1}^{T,N}$ a set of \quasipolicies{} around $\zvectilde$, respectively. We denote by $\{\Ktirho(\zvec)\}_{t=0,i=1}^{T,N}$ and $\{\Ktirhostar(\zvec)\}_{t=0,i=1}^{T,N}$ the KKT conditions with and without \quasipolicies, respectively. 
    Let $i\le N$ and $t\le T$.
    Suppose that the Jacobian matrices $\nabla K_{t,\rho}^i(\zvectilde)$, $\nabla \Ktirhostar(\zvectilde)$ and $\nabla \Ktirhostar(\zvec)$ are invertible. Let $\epsilon_{\zvec,\zvectilde}>0$ be an upper error bound such that 
    \begin{equation}\label{eq:epsilon definition policy gradient}
        \begin{aligned}
            \max \Big\{&\|\nabla \Ktirho(\zvectilde) - \nabla \Ktirhostar(\zvectilde)\|_2, &&\|\nabla \Ktirhostar(\zvectilde) - \nabla \Ktirhostar(\zvec)\|_2,\\
            & \|\Ktirho(\zvectilde) - \Ktirho(\zvec)\|_2  ,  &&\|\Ktirho(\zvec) - \Ktirhostar(\zvec)\|_2 \Big\}\le \epsilon_{\zvec,\zvectilde}.
        \end{aligned}
    \end{equation}
    Then, the error between the \quasigradient{} and the policy gradient can be bounded as follows,
    \begin{equation}\label{eq:linear policy approximation error}
        \begin{aligned}
            \|\nabla   \tilde{\pi}_{t,\rho}^i&(\zvectilde) - \nabla \pi_{t,\rho}^i (\zvec) \|_2  \le \epsilon_{\zvec,\zvectilde}\cdot \Big(2\|\nabla \Ktirhostar (\zvec)^{-1}\|_2  + \\&\big(\|\nabla \Ktirho(\zvectilde)^{-1}\|_2+ \|\nabla \Ktirhostar(\zvec)^{-1}\|_2 \big) \cdot\|\nabla \Ktirhostar(\zvectilde)^{-1}\|_2\|\Ktirho(\zvectilde)\|_2\Big).
        \end{aligned}
    \end{equation}
\end{proposition}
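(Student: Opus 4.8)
The plan is to run everything through the closed-form sensitivity expression for the (quasi-)policy gradient established in \eqref{eq:T,N,policy gradient} and \eqref{eq:Delta y T i}: under the invertibility hypotheses, each gradient is the product $-\big[(\nabla \Ktirho)^{-1}\big]_{u_t^i}\,\nabla_{[x_t,u_t^{1:i-1}]}\Ktirho$ of (the relevant rows of) the inverse own-variable KKT Jacobian with the external Jacobian. Thus $\nabla\localpitrhoi(\zvectilde)$ is assembled from the quasi-policy data $\nabla\Ktirho(\zvectilde)$, while $\nabla\pitrhoi(\zvec)$ is assembled from the true-policy data $\nabla\Ktirhostar(\zvec)$, and \eqref{eq:linear policy approximation error} becomes a perturbation estimate for a map of the form ``inverse Jacobian times external Jacobian'' as its data moves from the quasi-configuration at $\zvectilde$ to the true-configuration at $\zvec$.

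First I would insert the true-policy gradient evaluated at the common iterate $\zvectilde$, built from $\nabla\Ktirhostar(\zvectilde)$, and split the error by the triangle inequality into (i) a quasi-versus-true discrepancy at $\zvectilde$ and (ii) a pure evaluation-point change of the true-policy gradient from $\zvectilde$ to $\zvec$. On each piece I would apply the resolvent identity $A^{-1}-B^{-1}=A^{-1}(B-A)B^{-1}$ together with submultiplicativity, so that every inverse difference becomes a product of inverse operator norms with a Jacobian increment bounded, via \eqref{eq:epsilon definition policy gradient}, by $\|\nabla\Ktirho(\zvectilde)-\nabla\Ktirhostar(\zvectilde)\|_2$ in piece (i) and by the Lipschitz gap $\|\nabla\Ktirhostar(\zvectilde)-\nabla\Ktirhostar(\zvec)\|_2$ in piece (ii). Anchoring the contributions that retain a single inverse factor at $\nabla\Ktirhostar(\zvec)^{-1}$ and discarding remainders that are second order in $\epsilon_{\zvec,\zvectilde}$, these assemble into the leading term $2\,\epsilon_{\zvec,\zvectilde}\,\|\nabla\Ktirhostar(\zvec)^{-1}\|_2$ of \eqref{eq:linear policy approximation error}.

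The delicate part, and the step I expect to be the main obstacle, is the remaining group of terms carrying the residual factor $\|\nabla\Ktirhostar(\zvectilde)^{-1}\|_2\,\|\Ktirho(\zvectilde)\|_2$, the Newton estimate of the distance from the off-solution iterate $\zvectilde$ to a nearby exact solution. This factor appears because the quasi-policy of player $i$ is built recursively from those of the subsequent players $j>i$, which are linearized at $\zvectilde$, where the feedback-interaction residuals collected in $\Ktirho(\zvectilde)$ do not vanish. I would have to track how the two error sources identified in the text---the neglected high-order policy gradients, and their propagation into the KKT values of earlier players and stages, controlled by the value gaps $\|\Ktirho(\zvec)-\Ktirhostar(\zvec)\|_2$ and $\|\Ktirho(\zvectilde)-\Ktirho(\zvec)\|_2$ in \eqref{eq:epsilon definition policy gradient}---propagate through the nested external Jacobian and couple to this residual-weighted distance. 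Establishing that the resulting correction distributes across the two inverse factors as $\big(\|\nabla\Ktirho(\zvectilde)^{-1}\|_2+\|\nabla\Ktirhostar(\zvec)^{-1}\|_2\big)\|\nabla\Ktirhostar(\zvectilde)^{-1}\|_2\|\Ktirho(\zvectilde)\|_2$, and combines with $\epsilon_{\zvec,\zvectilde}$ exactly as written, is the careful bookkeeping that closes the estimate; adding it to the leading term gives \eqref{eq:linear policy approximation error}.
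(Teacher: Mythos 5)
Your overall architecture---a telescoping decomposition followed by the resolvent/Woodbury identity for differences of inverses (the paper's Lemma~\ref{lemma:symmetric matrix difference bound})---is the same machinery the paper uses, but there is a genuine gap at exactly the point you flag as the ``main obstacle,'' and it is not mere bookkeeping. The paper's proof rests on a representation you do not adopt: it writes the difference of gradients as a difference of products of the inverse Jacobian with the KKT residual \emph{value}, namely $\|\nabla \Ktirho(\zvectilde)^{-1}\Ktirho(\zvectilde) - \nabla\Ktirhostar(\zvec)^{-1}\Ktirhostar(\zvec)\|_2$, and inserts the three intermediate products $\nabla\Ktirhostar(\zvectilde)^{-1}\Ktirho(\zvectilde)$, $\nabla\Ktirhostar(\zvec)^{-1}\Ktirho(\zvectilde)$, and $\nabla\Ktirhostar(\zvec)^{-1}\Ktirho(\zvec)$. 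The four resulting groups map one-to-one onto the four quantities in \eqref{eq:epsilon definition policy gradient}: the two inverse-difference groups carry the common factor $\|\Ktirho(\zvectilde)\|_2$ and, after the lemma, yield $\epsilon_{\zvec,\zvectilde}\big(\|\nabla\Ktirho(\zvectilde)^{-1}\|_2+\|\nabla\Ktirhostar(\zvec)^{-1}\|_2\big)\|\nabla\Ktirhostar(\zvectilde)^{-1}\|_2\|\Ktirho(\zvectilde)\|_2$, while the two residual-value-difference groups carry the single factor $\|\nabla\Ktirhostar(\zvec)^{-1}\|_2$ and yield $2\epsilon_{\zvec,\zvectilde}\|\nabla\Ktirhostar(\zvec)^{-1}\|_2$. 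Because you instead start from the (inverse Jacobian)$\times$(external Jacobian) form of \eqref{eq:T,N,policy gradient}, the factor $\|\Ktirho(\zvectilde)\|_2$ and the two residual-value gaps assumed in \eqref{eq:epsilon definition policy gradient} have no entry point in your estimate---which is precisely why you cannot derive that term and leave it as an unresolved obstacle.

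Two further mismatches confirm the gap. First, you attribute the leading term $2\epsilon_{\zvec,\zvectilde}\|\nabla\Ktirhostar(\zvec)^{-1}\|_2$ to inverse-difference contributions ``anchored'' at $\nabla\Ktirhostar(\zvec)^{-1}$; in the paper it arises from the residual-value differences $\|\Ktirho(\zvectilde)-\Ktirho(\zvec)\|_2$ and $\|\Ktirho(\zvec)-\Ktirhostar(\zvec)\|_2$, not from any inverse difference. Second, \eqref{eq:linear policy approximation error} is an exact inequality and the paper discards nothing, so a derivation that ``discards remainders that are second order in $\epsilon_{\zvec,\zvectilde}$'' would not establish the claim as stated.
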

\begin{proof}
    The proof can be found in Appendix. 
\end{proof}

Proposition~\ref{prop:Quasi policy} suggests that the error introduced by the \quasigradients{} is proportional to $\epsilon_{\zvec,\zvectilde}$, as described in \eqref{eq:linear policy approximation error}. However, it is challenging to obtain an analytical bound $\epsilon_{\zvec,\zvectilde}$ because the evaluation of $\Ktirhostar(\zvec)$ and $\nabla \Ktirhostar(\zvec)$ requires computing the high-order policy gradients. 
The above analysis only provides a partial analysis for the policy gradient error introduced by the \quasigradients. In principle, it is possible that the \quasigradients{} could lead to a different feedback Stackelberg policy from the ground truth feedback Stackelberg policy. 
However, it is intractable to compute high-order policy gradients when we have a long horizon game. In general, the quasi-policy is a local linear approximation to the ground truth nonlinear feedback Stackelberg policy, and when a state perturbation occurs at time $t$, such policies are only approximately optimal for the resulting sub-game. We believe that the local feedback Stackelberg \quasi{} is the closest computationally tractable approximation possible when we consider the first-order policy approximation techniques for long-horizon feedback Stackelberg games.

Furthermore, we can leverage the sufficient condition of the local FSE and the convergence analysis in Theorem~\ref{thm:PDIP convergence} to show that we will converge to a local FSE of nonlinear games under certain conditions on the iterative LQ game approximations. 
\begin{theorem}[Exponential Convergence in Nonlinear Games]\label{thm:convergence of nonlinear games}
    Suppose that there exist constants $(D,C,\delta,\alphaupper)$, as defined in Theorem~\ref{thm:PDIP convergence}, such that at each iteration $k$ of Algorithm~\ref{alg:pdip LQ}, the approximate LQ game defined in \eqref{eq:linearization} and \eqref{eq:quadraticization} satisfies the conditions of Theorem~\ref{thm:PDIP convergence}. Then, for each $\rho>0$ and a sufficiently large $k$, $\zvecrho^{(k)}$ converges exponentially fast to a solution $\zvecrho^*$, which renders $\|\Krho(\zvecrho^*)\|_2=0$. Moreover, if the limit $\zvec^*:=\lim_{\rho\to 0} \zvecrho^*$ exists and Theorem~\ref{thm:sufficient condition}, which provides a sufficient condition for local FSE trajectories, holds true at $\zvecrho^*$ for all $\rho>0$, then the converged solution $\zvec^*$ recovers a local FSE trajectory. 
\end{theorem}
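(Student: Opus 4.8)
The plan is to reduce the exponential-convergence claim to Theorem~\ref{thm:PDIP convergence} via the alignment property of the LQ approximations, and then to recover a local FSE trajectory through a homotopy limit as $\rho\to0$. The essential observation is that the modified Algorithm~\ref{alg:pdip LQ} never solves the nonlinear game directly: at each iterate $\zvecrho^{(k)}$ it computes the Newton direction $\Delta\zvecrho = -(\nabla \Krho)^+ \Krho(\zvecrho^{(k)})$ from the first-order model $0 = \nabla\Krho(\zvecrho^{(k)})\cdot\Delta\zvecrho + \Krho(\zvecrho^{(k)})$, where $\nabla\Krho$ uses the \quasigradients{} and, by construction of \eqref{eq:linearization}--\eqref{eq:quadraticization}, coincides with the Jacobian of the approximate LQ game's KKT residual at that point. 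Since the proof of Theorem~\ref{thm:PDIP convergence} uses only the three structural bounds \eqref{eq:convergence pseudo inverse of K}, \eqref{eq:convergence Lipschitz}, and $\|\nablastar\Krho - \nabla\Krho\|_2\le\delta$ with $D\delta<1$ --- all assumed here to hold at every iteration $k$ with the \emph{same} constants $(D,C,\delta,\alphaupper)$ --- its per-step merit decrease applies verbatim to the nonlinear residual $\|\Krho(\zvecrho^{(k)})\|_2$. First I would make this transfer explicit, noting that re-linearizing around each new iterate is precisely what lets the uniform constants be reused.

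Next I would chain the two cases of Theorem~\ref{thm:PDIP convergence} into global exponential convergence for a fixed $\rho>0$. While $\|\Krho(\zvecrho^{(k)})\|_2 > \frac{1-D\delta}{D^2C\alphaupper}$ (Case~1), each step reduces the merit by the fixed amount $\frac{(1-D\delta)^2}{2D^2C}>0$, so after at most $K_0 := \lceil \|\Krho(\zvecrho^{(0)})\|_2\, 2D^2C/(1-D\delta)^2\rceil$ iterations the residual drops below the threshold. Thereafter (Case~2) it contracts by the factor $r := 1-\tfrac12\alphaupper(1-D\delta)\in(0,1)$, giving $\|\Krho(\zvecrho^{(k)})\|_2 \le r^{\,k-K_0}\|\Krho(\zvecrho^{(K_0)})\|_2$, hence exponential decay to zero. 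To upgrade residual decay to iterate convergence I would use invertibility of $\nabla\Krho$: from $\|\alpha\Delta\zvecrho\|_2 \le D\|\Krho(\zvecrho^{(k)})\|_2$ and the geometric bound, the step lengths are summable, so $\{\zvecrho^{(k)}\}$ is Cauchy, converges to some $\zvecrhostar$, and $\|\Krho(\zvecrhostar)\|_2 = \lim_k \|\Krho(\zvecrho^{(k)})\|_2 = 0$ by continuity.

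For the second claim I would send $\rho\to0$. Each $\zvecrhostar$ satisfies the relaxed system $\Krho(\zvecrhostar)=0$, which contains stationarity, primal feasibility, dual feasibility $\gammavec,\svec\ge0$, and the homotopy relation $\gamma_t^i\odot s_t^i = \rho\mathbf{1}$ from \eqref{eq:homotopy}. Assuming the limit $\zvec^*=\lim_{\rho\to0}\zvecrhostar$ exists, twice differentiability (Assumption~\ref{assumption:feasible}) lets all gradient terms pass to the limit continuously; in particular $\gamma_t^i\odot s_t^i\to0$ with $s_t^i = g_t^i$ yields the complementary slackness $0\le\gamma_t^i\perp g_t^i\ge0$, so $\zvec^*$ satisfies the KKT conditions \eqref{eq:KKT} of Theorem~\ref{thm:necessary condition}. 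Since the second-order sufficient condition of Theorem~\ref{thm:sufficient condition} is assumed to hold at every $\zvecrhostar$, and the Lagrangian Hessians together with the gradients defining the critical cone (the \quasigradients{} $\nabla\pi$, and $\nabla f$, $\nabla h$) vary continuously, the strict positivity on the critical cone carries over to $\zvec^*$. Invoking Theorem~\ref{thm:sufficient condition} at $\zvec^*$ then certifies it as a local FSE trajectory.

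The main obstacle is the $\rho\to0$ limit in the second claim rather than the convergence itself: the second-order condition is a \emph{strict} inequality restricted to the critical cone \eqref{eq:sufficient condition critical cone}, and both the strictness and the active-set/cone structure can degenerate in the limit (e.g., an inequality multiplier and its slack both tending to zero, changing the admissible critical directions). Handling this cleanly requires either a uniform-in-$\rho$ positive lower bound on the reduced Hessian or a careful argument that the critical cone at $\zvec^*$ is contained in the limit of the cones at $\zvecrhostar$, which is where the strict-complementarity hypothesis of Theorem~\ref{thm:necessary condition} does the real work. By comparison, the transfer of Theorem~\ref{thm:PDIP convergence} and the two-phase chaining are essentially bookkeeping once the alignment of the first-order models is granted.
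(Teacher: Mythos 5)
Your proposal is correct and follows essentially the same route as the paper, whose own proof is a three-line sketch: it observes that the first-order KKT model of the LQ approximation coincides with that of the nonlinear game, invokes Theorem~\ref{thm:PDIP convergence} for the residual decay (with a Case-1 iteration threshold in the same spirit as your $K_0$), and then cites Theorem~\ref{thm:sufficient condition} for the FSE claim. The additional care you supply --- chaining the two cases explicitly, the summable-step-length/Cauchy argument for iterate convergence, and especially the worry that the strict second-order inequality and the critical cone could degenerate as $\rho\to0$ --- goes beyond the paper, which does not address that limit at all and implicitly relies on the hypothesis that the sufficient condition holds at every $\zvecrho^*$ together with existence of the limit; your flagged obstacle is a genuine gap in the paper's own argument rather than in yours.
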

\begin{proof}
    The proof can be found in the Appendix.
\end{proof}

\section{Experiments}
In this section, we consider a two-player feedback Stackelberg game modeling highway driving\footnote{The code is available at \url{https://github.com/jamesjingqili/FeedbackStackelbergGames.jl.git}}, where two highway lanes merge into one and the planning horizon $T=20$. We associate with each player a 4-dimensional state vector $x_t^i=[p_{x,t}^i, p_{y,t}^i ,v_t^i, \theta_t^i]$, where $(p_{x,t}^i,p_{y,t}^i)$ represents the $(x,y)$ coordinate, $v_t^i$ denotes the velocity, and $\theta_t^i$ encodes the heading angle of player $i$ at time $t$. The joint state vector of the two players is denoted as $x_t = [x_t^1,x_t^2]$. Both players have nonlinear unicycle dynamics, $\forall t\in\mathbf{I}_0^T$, $\forall i\in\{1,2\}$,
\begin{equation}\label{eq:unicycle dynamics}
    \begin{aligned}
        &p_{x,t+1}^i  = p_{x,t}^i + \Delta t \cdot v_{t}^i\sin (\theta_t^i),
        &&p_{y,t+1}^i  = p_{y,t}^i + \Delta t \cdot v_{t}^i\cos(\theta_t^i), \\ 
        &v_{t+1}^i  = v_{t}^i + \Delta t \cdot a_t^i,
        &&\theta_{t+1}^i  = \theta_t^i + \Delta t \cdot \omega_t^i.
    \end{aligned}
\end{equation}
We consider the following cost functions, for all $t\in \mathbf{I}_0^T$,
\begin{equation}
        \ell_t^1(x_t,u_t) = 10(p_{x,t}^1-0.4)^2 + 6(v_t^1 - v_t^2)^2 + 2\|u_t^1\|_2^2, \ell_t^2(x_t,u_t) =  \|\theta_t^2\|_2^4 + 2\|u_t^2\|_2^2,
\end{equation}
and the terminal costs $\ell_{T+1}^1(x_{T+1}) = 10(p_{x,T+1}^1-0.4)^2+ 6(v_t^1 - v_t^2)^2$ and $ \ell_{T+1}^2(x_{T+1}) = \|\theta^2_{T+1}\|_2^4$. 
Note that we include a fourth-order cost term in player 2's cost at each stage to model its preference of small heading angle. We consider the following (nonconvex) constraints encoding collision avoidance, driving on the road, and control limits,
\begin{equation}
    \begin{aligned}
        &\sqrt{\|p_{x,t}^1 - p_{x,t}^2\|_2^2 + \|p_{y,t}^1 - p_{y,t}^2\|^2_2} - d_{\mathrm{safe}}\ge 0, && t\in\mathbf{I}_0^{T+1},\\
        &p_{x,t}^i - p_{l} \ge 0, \hspace{0.8cm} p_{r}(p_{y,t}^i, p_{x,t}^i) \ge 0,\hspace{0.8cm} \|u_t\|_{\infty} \le u_{\max}, &&  t\in \mathbf{I}_0^{T+1},i \in\{1,2\},
    \end{aligned}
\end{equation}
where we define $p_l\in\mathbb{R}$ to be the left road boundary and denote by $p_r(p_{x,t}^i,p_{y,t}^i)$ the distance between player $i$ and the right road boundary curve. 
We also consider the following equality constraints at the terminal time
\begin{equation}
    \begin{aligned}
        v_{T+1}^1 - v_{T+1}^2 = 0,\ \theta_{T+1}^1 = 0,
    \end{aligned}
\end{equation}
where the two players aim to reach a consensus on their speeds, with player 1 maintaining its heading angle pointing forwards. 

\begin{figure}[t!]
\centering
    \begin{minipage}[t]{.24\textwidth}
        \centering
        \includegraphics[width=\textwidth,trim=10mm 10mm 1mm 10mm]{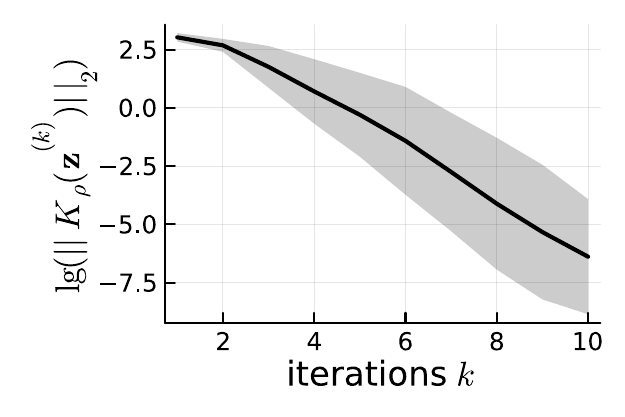}
        \subcaption{$\rho = 1$.}\vspace{-1.8em}\label{fig:loss 1}
    \end{minipage}
    \hfill
    \begin{minipage}[t]{.24\textwidth}
        \centering
        \includegraphics[width=\textwidth,trim=8mm 10mm 1mm 10mm]{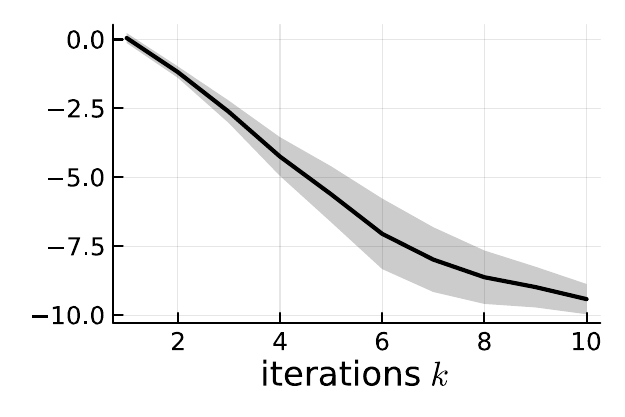}
        \subcaption{$\rho = 2^{-1}$.}\vspace{-1.8em}\label{fig:loss 2}
    \end{minipage}  
    \hfill
    \begin{minipage}[t]{.24\textwidth}
        \centering
        \includegraphics[width=\textwidth,trim=8mm 10mm 1mm 10mm]{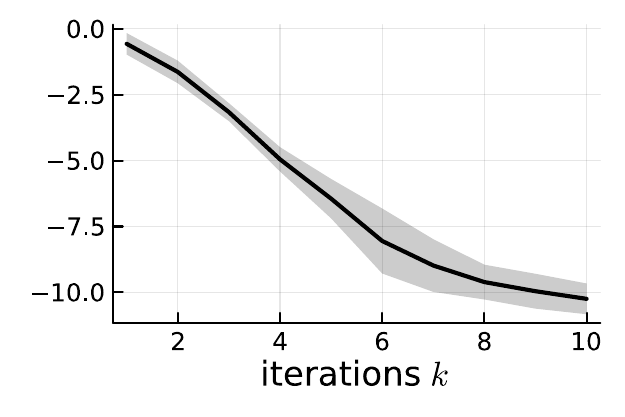}
        \subcaption{$\rho = 2^{-5}$.}\vspace{-1.8em}\label{fig:loss 3}
    \end{minipage}  
    \hfill
    \begin{minipage}[t]{.24\textwidth}
        \centering
        \includegraphics[width=\textwidth,trim=8mm 10mm 1mm 10mm]{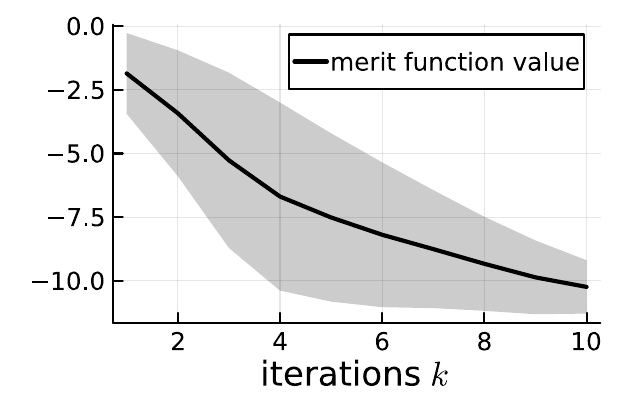}
        \subcaption{$\rho = 2^{-10}$.}\vspace{-1.8em}\label{fig:loss 4}
    \end{minipage}  
    \caption{Convergence of Algorithm~\ref{alg:pdip LQ} with iterative LQ game approximations under different values of the homotopy parameter $\rho$ from 10 sampled initial states. The solid curve and the shaded area denote the mean and the standard deviation of the logarithm of the merit function values, respectively. By gradually annealing $\rho$ to zero, the solution converges to a local FSE trajectory. Moreover, under each $\rho$, the plots above empirically support the linear convergence described in Theorem~\ref{thm:convergence of nonlinear games}. 
    }\label{fig:convergence} \vspace{-1em}
\end{figure}
\begin{figure}[t!]
    \begin{subfigure}[t]{.24\textwidth}
        \centering
        \includegraphics[width=\textwidth, trim = 2.5mm 10mm 0mm 0mm]{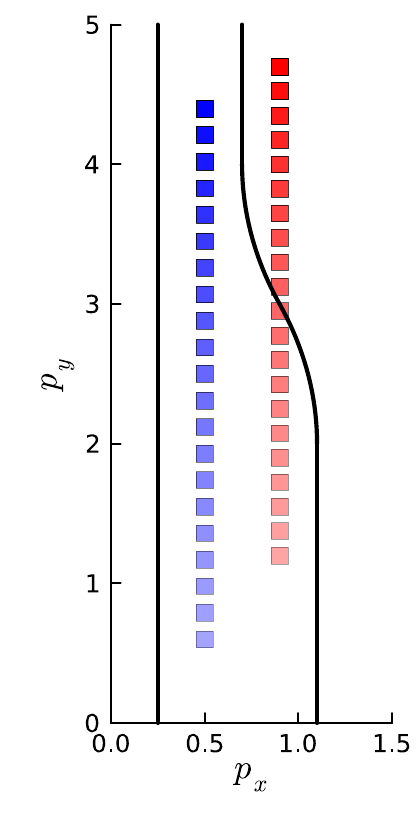}
        \caption{$\rho = 1$, $k=0$.}\label{subfig:1}\vspace{-1.8em}
    \end{subfigure}
    \hfill
    \begin{subfigure}[t]{.24\textwidth}
        \centering
        \includegraphics[width=0.963\textwidth, trim = 5mm 10mm 0mm 0mm]{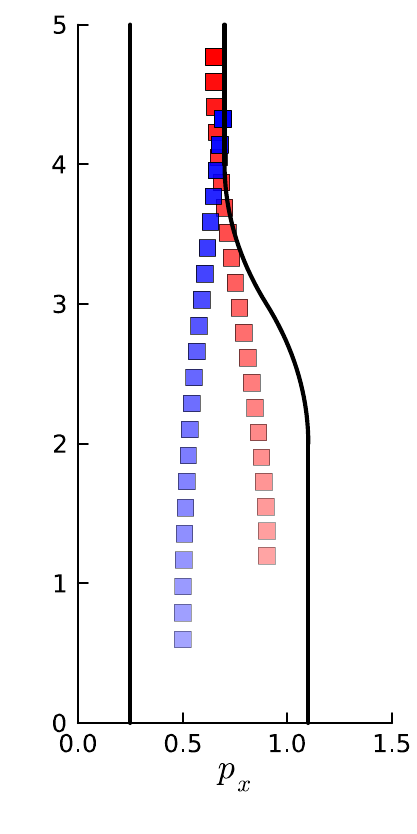}
        \caption{$\rho = 1$, $k=3$.}\label{subfig:2}\vspace{-1.8em}
    \end{subfigure}  
    \hfill
    \begin{subfigure}[t]{.24\textwidth}
        \centering
        \includegraphics[width=0.963\textwidth, trim = 5mm 10mm 0mm 0mm]{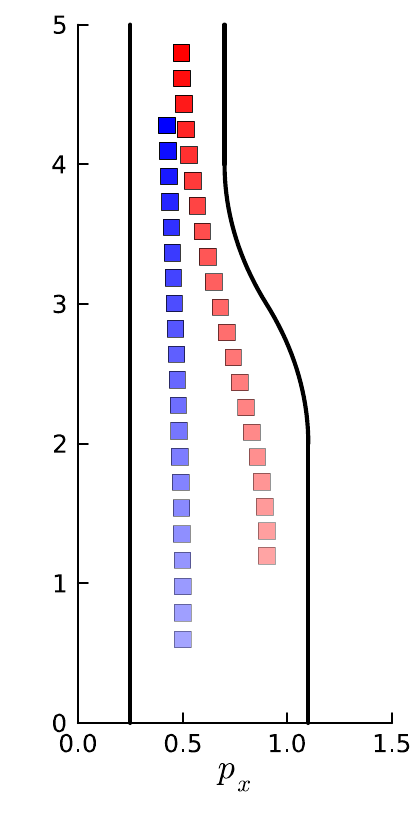}
        \caption{$\rho = 1$, $k=6$.}\label{subfig:3}\vspace{-1.8em}
    \end{subfigure}  
        \hfill
    \begin{subfigure}[t]{.24\textwidth}
        \centering
        \includegraphics[width=0.963\textwidth, , trim = 5mm 10mm 0mm 0mm]{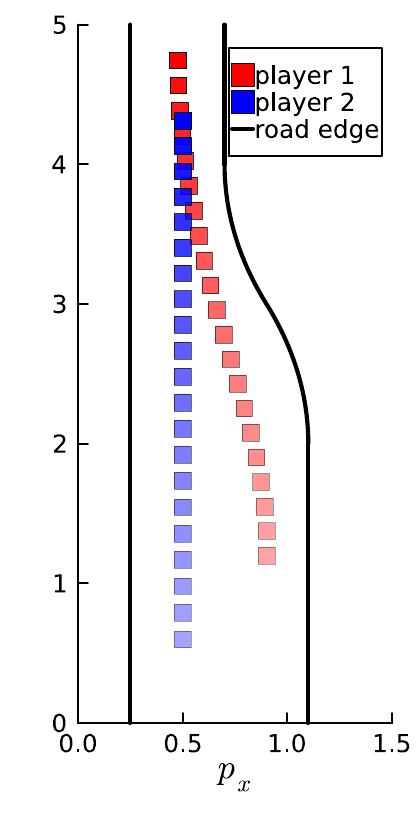}
        \caption{$\rho = 2^{-10}$, $k=10$.}\vspace{-1.8em}\label{subfig:convergence}
    \end{subfigure}
    \caption{Tolerance of an infeasible trajectory initialization and the converged trajectories of two players. In Figure \ref{subfig:1}, we plot the initial state trajectories of two players, where player 1's trajectory is infeasible because it violates the road boundary constraint. When $\rho=1$, we plot the state trajectories in the third and the sixth iterations in Figure~\ref{subfig:2} and Figure~\ref{subfig:3}, respectively. They become feasible at the sixth iteration. In Figure \ref{subfig:convergence}, we plot the converged solution, with $\rho = 2^{-10}$.  }\label{fig:tolerance of infeasible initialization}\vspace{-1em}
\end{figure}

The nominal initial states of two players are specified as $x_0^1= [0.9,1.2, 3.5, 0.0]$ and $x_0^2=[0.5,0.6,3.8,0.0]$, respectively. We randomly sample 10 initial states around $x_0=[x_0^1,x_0^2]$ under a uniform distribution within the range of $-0.1$ to $0.1$. From each sampled $\hat{x}_0$, we obtain an initial state trajectory $\xvec^{(0)}$ by simulating the nonlinear dynamics \eqref{eq:unicycle dynamics} with the initial controls $\uvec^{(0)} = \mathbf{0}$. Set the initial slack variables for the inequality constraints as $\svec^{(0)} = \mathbf{1}$, along with the corresponding Lagrange multipliers $\gammavec^{(0)} = \mathbf{1}$. We set all other Lagrange multipliers $\{\lambdavec^{(0)}, \muvec^{(0)}, \etavec^{(0)}, \psivec^{(0)}\}$ to zeros. Consequently, we have constructed an initial solution $\zvec^{(0)} = [\xvec^{(0)},\uvec^{(0)},\lambdavec^{(0)},\gammavec^{(0)} \muvec^{(0)}, \etavec^{(0)}, \psivec^{(0)}, \svec^{(0)}]$. We repeat this initialization trajectory defining process for different sampled~$\hat{x}_0.$ 

For each sampled initial state $\hat{x}_0$, we employ Algorithm~\ref{alg:pdip LQ} with iterative LQ game approximations to compute a local FSE trajectory. The convergence of our method under different sampled $\hat{x}_0$ is depicted in Figure \ref{fig:convergence}. For each $\rho$, the merit function value decreases as the iterations continue. Furthermore, since the cost functions are strongly convex with respect to each player's controls, Theorem~\ref{thm:sufficient condition} ensures that our converged solution constitutes a local FSE trajectory. Moreover, we show our method can tolerate infeasible initialization in Figure \ref{fig:tolerance of infeasible initialization}, where the right road boundary constraint is initially violated by initialization $\zvec^{(0)}$, and as the algorithm progresses, subsequent iterates $\zvec^{(k)}$ become feasible. 


\section{Conclusions}
In this paper, we considered general-sum feedback Stackelberg dynamic games with coupled constraints among $N$ players. We proposed a primal-dual interior point method to compute an approximate feedback Stackelberg equilibrium and the associated policies for all players. To the best of the authors' knowledge,  this represents the first attempt to compute approximate local feedback Stackelberg equilibria in both LQ games and nonlinear games under general coupled equality and inequality constraints, within continuous state and action spaces. We theoretically characterized the approximation error and the exponential convergence of our algorithm. Numerical experiments suggest that the proposed algorithm can tolerate infeasible initializations and efficiently converge to a feasible equilibrium solution. Future research should investigate the potential benefits of higher-order policy gradient approximations. Additionally, extending our approach to solve other types of equilibria in dynamic games is also a promising direction for future research. 



\section*{Acknowledgments}
We would like to thank Professor Lillian Ratliff, Professor Forrest Laine, and Eli Brock for valuable discussions and comments. We used ChatGPT-4 \cite{openai2023chatgpt} to check the grammar in Sections 1 and 2.

\appendix
\section{Supplementary results} 
\begin{proof}[Proof of Theorem~\ref{thm:nested constrained optimization problem}]
At the terminal time $t=T$, for ease of notation, we define $x_T = \bar{x}_T$, and $u_T^{1:i-1}=\bar{u}_T^{1:i-1}$. We observe that, for each player $i\in\mathbf{I}_1^N$, the equation \eqref{eq:player's constrained problem} can be rewritten as
\begin{align*}
    \tilde{u}_T^i \in\arg_{u_T^i} \min_{u_T^i} \Big\{\min_{\substack{u_{T}^{i+1:N} \\ x_{T+1}}}&  \ell_{T}^i(x_T,u_T) + V_{T+1}^i(x_{T+1})\Big\}\\
    \textrm{s.t. }
    &0=u_{T}^j - \pi_T^{j}(x_T, u_T^{1:j-1}) ,\  0=x_{T+1} - f_T(x_T,u_T)&& j\in  \mathbf{I}_{i+1}^N \\
    &0 = h_T^i(x_T,u_T),\ 0\le g_T^i(x_T, u_T) \\
    & 0=h_{T+1}^i(x_{T+1}),\ 0\le g_{T+1}^i(x_{T+1})
\end{align*}
which implies $\tilde{u}_T^i\in \arg_{u_T^i} \min_{u_T^i} Z_T^i(\bar{x}_T,\bar{u}_T^{1:i-1},u_T^i)$. 
Moreover, for all $t\in \mathbf{I}_0^{T-1}$ and $i\in \mathbf{I}_1^N$, for the ease of notation, we assume $x_t = \bar{x}_t$, and $u_t^{1:i-1}=\bar{u}_t^{1:i-1}$. We observe
\begin{align*}
    \tilde{u}_t^i\in\arg_{u_t^i} \min_{u_t^i} \Big\{\min_{\substack{u_{t}^{i+1:N} \\ u_{t+1:T}^{1:N} \\ x_{t+1:T+1}}}& \sum_{\tau = t}^T \ell_{\tau}^i(x_\tau,u_\tau) + \ell_{T+1}^i(x_{T+1})\Big\}\\
    \textrm{s.t. }
    & 0= u_t^j - \pi_t^j(x_t,u_t^{1:j-1}) && j\in \mathbf{I}_{i+1}^N \label{eq:}\\
    & 0=x_{\tau+1} - f_\tau(x_\tau,u_\tau) &&\tau\in\mathbf{I}_{t}^T \\
    &0 = u_\tau^j - \pi_\tau^j(x_\tau, u_\tau^{1:j-1}) && \tau\in \mathbf{I}_{t+1}^T,j\in\mathbf{I}_1^N\setminus \{i\} \\
    &0 = h_\tau^i(x_\tau,u_\tau) ,\ 0\le g_\tau^i(x_\tau, u_\tau) && \tau \in \mathbf{I}_{t}^T\\
    & 0=h_{T+1}^i(x_{T+1}),\ 0\le g_{T+1}^i(x_{T+1})
\end{align*}
The above can be further rewritten as\vspace{-0.5em}
\begin{align*}
    \tilde{u}_t^i\in\arg_{u_t^i} \min_{u_t^i} \Big\{\min_{\substack{u_{t}^{i+1:N} \\ x_{t+1}}}&  \ell_{t}^i(x_t,u_t) + V_{t+1}^i(x_{t+1})\Big\}\\
    \textrm{s.t. }
    & 0= u_t^j - \pi_t^j(x_t,u_t^{1:j-1}),\  0=x_{t+1} - f_t(x_t,u_t) && j\in \mathbf{I}_{i+1}^N\\
    &0 = h_t^i(x_t,u_t), \ 0\le g_t^i(x_t, u_t)
\end{align*}
It follows that $\tilde{u}_t^i \in \arg_{u_t^i} \min_{u_t^i} Z_t^i(\knownxt,\knownutoneiminusone,u_t^i)$. 
Therefore, the set of strategies $\{\pi_t^i\}_{t=0,i=1}^{T,N}$ constitutes a set of local feedback Stackelberg policies.
\end{proof}
\begin{proof}[Proof of Theorem~\ref{thm:necessary condition}]
    For a time $t\in\mathbf{I}_0^T$ and player $i\in\mathbf{I}_1^N$, we set the gradient of $\mathcal{L}_t^i$ with respect to $\{u_\tau^i\}_{\tau=t}^T$ and $\{x_\tau\}_{\tau=t+1}^{T+1}$ to be zero. This constitutes the first two rows of $\eqref{eq:KKT}$. In addition, a player $i<N$ considers the feedback interaction constraints $0=u_t^{j*} - \pi_t^{j}(x_t^*,u_1^{1:j-1*})$, for $j\in\mathbf{I}_{i+1}^N$. This constraint is implicitly ensured when we include player $j$'s KKT conditions into player $i$'s KKT conditions. Thus, we only need to ensure the gradient $\nabla_{u_t^j}\mathcal{L}_t^i$ to be zero, when synthesizing player $i$'s KKT conditions. This corresponds to the third row of \eqref{eq:KKT}. Moreover, at a time $t<T$, each player $i\in \mathbf{I}_1^N$ needs to account for the feedback reaction from other players in future steps. Again this constraint is implicitly ensured when we define player $j$'s KKT conditions. We only need to additionally set the gradient of $\mathcal{L}_t^i$ with respect to $u_{\tau}^j$ to be zero, where $\tau\in\mathbf{I}_{t+1}^T$ and $j\in\mathbf{I}_1^N\setminus\{i\}$. These correspond to the fourth row of \eqref{eq:KKT}. Finally, we include the dynamics constraints, equality and inequality constraints, and complementary slackness conditions in the last five rows of \eqref{eq:KKT}.
\end{proof}
\begin{proof}[Proof of Theorem~\ref{thm:sufficient condition}]
    We can check that the feasible set for the equality constraints of \eqref{eq:sufficient condition critical cone} is a superset of the critical cone of the problem \eqref{eq:KKT}. By Theorem 12.6 in \cite{nocedal1999numerical}, the solution$(\mathbf{x}^*,\mathbf{u}^*)$ constitutes a local feedback Stackelberg equilibrium trajectory.
\end{proof}
\begin{proof}[Proof of Theorem~\ref{thm:PDIP convergence}]
    By fundamental theorem of calculus, we have $\Krho(\zvec+\alpha \Delta \zvec) = \Krho(\zvec) + \int_0^1 \nablastar \Krho(\zvec \allowbreak + \tau \alpha \Delta \zvec) \alpha \Delta \zvec d\tau$, and we have
    \begin{equation}\label{eq:main inequality}
        \begin{aligned}
            &\|\Krho(\zvec+\alpha \Delta \zvec )\|_2   = \left\|\Krho(\zvec) + \int_{0}^1 \nablastar \Krho(\zvec + \tau\alpha \Delta \zvec) \alpha \Delta \zvec d \tau \right\|_2 \\ 
            & \le \|\Krho(\zvec) + \alpha \nablastar \Krho(\zvec)
                \Delta \zvec  \|_2 + \left\|\int_0^1 (\nablastar \Krho(\zvec+\tau\alpha\Delta \zvec) -\nablastar \Krho(\zvec))\alpha \Delta \zvec
             d\tau \right\|_2
        \end{aligned}
    \end{equation}
Substituting $\Delta \zvec$ into $\|\Krho(\zvec) + \alpha \nablastar \Krho(\zvec)
                \Delta \zvec  \|_2$, we have 
\begin{equation}\label{eq:bound first K}
    \begin{aligned}
        \|\Krho(\zvec)& + \alpha \nablastar \Krho(\zvec)\Delta \zvec\|_2 =  \|\Krho(\zvec) - \alpha\nablastar \Krho(\zvec) (\nabla \Krho(\zvec))^{-1}\Krho(\zvec)\|_2\\
        \le & (1-\alpha) \|\Krho(\zvec)\|_2 + \alpha \|\nablastar \Krho(\zvec) - \nabla \Krho(\zvec)\|_2 \|(\nabla \Krho(\zvec))^{-1}\|_2 \|\Krho(\zvec)\|_2\\
        \le &(1-\alpha) \|\Krho(\zvec)\|_2 + \alpha \delta D \|\Krho(\zvec)\|_2=(1-\alpha(1-\delta D))\|\Krho(\zvec)\|_2
    \end{aligned}
\end{equation}
Combining \eqref{eq:bound first K} and \eqref{eq:main inequality}, we have
\begin{equation*}
\begin{aligned}
    \|&\Krho(\zvec + \alpha \Delta \zvec)\|_2\\
            & \le (1-\alpha(1-\delta D))\|\Krho(\zvec)\|_2 + \left\|\alpha 
                \Delta \zvec\right\|_2 \left\| \int_0^1 \|\nabla^* \Krho(\zvec+\tau\alpha \Delta \zvec) - \nabla^* \Krho(\zvec)\|d\tau \right\|_2\\
            & \le (1-\alpha(1-\delta D)) \|\Krho(\zvec)\|_2 + \frac{1}{2}\alpha^2  D^2  C \|\Krho(\zvec)\|_2^2
\end{aligned}
\end{equation*}
where the right hand side is minimized when $\alpha^* = \frac{1-D\delta}{D^2C\|\Krho (\zvec)\|_2}$. 
Suppose $\|\Krho(\zvec)\|_2>\frac{1-D\delta}{D^2C\alphaupper}$, then $\alphaupper > \frac{1-D\delta}{D^2 C \|\Krho(\zvec)\|_2}$ and we have $\|\Krho(\zvec + \alpha^*  \Delta \zvec )\|_2  \le  \|\Krho(\zvec)\|_2 -\frac{(1-D\delta)^2}{2D^2 C} $.

For the case $\|\Krho(\zvec)\|_2\le \frac{1- D \delta}{D^2C\alphaupper}$, let $\alpha := \alphaupper$. By $\alphaupper D^2C\|\Krho(\zvec)\|_2\le 1-D\delta$, we have $        \|\Krho(\zvec + \alphaupper \Delta \zvec)\|_2 \le (1-\frac{1}{2}\alphaupper(1-D\delta))\|\Krho(\zvec)\|_2
$.
\end{proof}

\begin{proof}[Proof of Proposition \ref{prop:Quasi policy}]
By definition, we have
\begin{equation*}
\begin{aligned}
    \|&\nabla   \tilde{\pi}_{t,\rho}^i(\zvectilde) - \nabla \pi_{t,\rho}^i (\zvec) \|_2 = \| \nabla \Ktirho (\zvectilde)^{-1} \Ktirho(\zvectilde) -\nabla \Ktirhostar(\zvec)^{-1} \Ktirhostar(\zvec)  \|_2\\ 
    & = \|  \nabla \Ktirho(\zvectilde)^{-1} \Ktirho(\zvectilde) - \nabla \Ktirhostar(\zvec)^{-1} \Ktirhostar(\zvec)\\ 
    & \hspace{1cm} + \nabla\Ktirhostar(\zvectilde)^{-1}\Ktirho(\zvectilde) - \nabla\Ktirhostar(\zvectilde)^{-1}\Ktirho(\zvectilde) \\
    & \hspace{1cm} + \nabla\Ktirhostar(\zvec)^{-1} \Ktirho(\zvectilde)- \nabla\Ktirhostar(\zvec)^{-1} \Ktirho(\zvectilde) \\ 
    & \hspace{1cm} + \nabla \Ktirhostar(\zvec)^{-1} \Ktirho(\zvec) -\nabla \Ktirhostar(\zvec)^{-1} \Ktirho(\zvec)\|_2\\
    & \le \Big(\|\nabla \Ktirho(\zvectilde)^{-1} - \nabla \Ktirhostar(\zvectilde)^{-1}\|_2 + \|\nabla \Ktirhostar(\zvectilde)^{-1} - \nabla \Ktirhostar(\zvec)^{-1}\|_2\Big) \|\Ktirho(\zvectilde)\|_2 \\
    & \hspace{1cm} + \Big(\|\Ktirho(\zvectilde) - \Ktirho(\zvec)\|_2  + \|\Ktirho(\zvec) - \Ktirhostar(\zvec)\|_2 \Big) \|\nabla \Ktirhostar (\zvec)^{-1}\|_2 \\
    & \le \epsilon_{\zvec,\zvectilde}\Big( \|\nabla \Ktirho(\zvectilde)^{-1}\|_2 + \|\nabla \Ktirhostar(\zvec)^{-1}\|_2 \Big)\|\nabla \Ktirhostar(\zvectilde)^{-1}\|_2\|\Ktirho(\zvectilde)\|_2\\
    & \hspace{1cm} + 2\epsilon_{\zvec,\zvectilde} \|\nabla \Ktirhostar (\zvec)^{-1}\|_2 
\end{aligned}\vspace{-1em}
\end{equation*}
    where the last line follows by applying Lemma~\ref{lemma:symmetric matrix difference bound}.
\end{proof}

\begin{lemma}\label{lemma:symmetric matrix difference bound}
    Let $K$ and $\tilde{K}$ be two invertible matrices. Suppose $\|K - \tilde{K}\|_2\le \epsilon$, then we have $\|K^{-1} - \tilde{K}^{-1}\|_2 \le \epsilon\|K^{-1}\|_2 \cdot \|\tilde{K}^{-1}\|_2$.
\end{lemma}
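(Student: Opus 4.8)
The plan is to exploit the standard ``resolvent identity'' that relates the difference of two matrix inverses to the difference of the matrices themselves, and then to apply submultiplicativity of the spectral norm. The whole argument is short because both $K$ and $\tilde{K}$ are assumed invertible, so every inverse appearing below is well defined.

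First I would write down the algebraic identity
\begin{equation*}
    K^{-1} - \tilde{K}^{-1} = K^{-1}(\tilde{K} - K)\tilde{K}^{-1},
\end{equation*}
which is verified by expanding the right-hand side as $K^{-1}\tilde{K}\tilde{K}^{-1} - K^{-1}K\tilde{K}^{-1} = K^{-1} - \tilde{K}^{-1}$. Next I would take the spectral norm of both sides and apply submultiplicativity, $\|AB\|_2 \le \|A\|_2 \|B\|_2$, to the triple product, giving
\begin{equation*}
    \|K^{-1} - \tilde{K}^{-1}\|_2 \le \|K^{-1}\|_2\, \|\tilde{K} - K\|_2\, \|\tilde{K}^{-1}\|_2.
\end{equation*}
Finally, substituting the hypothesis $\|\tilde{K} - K\|_2 = \|K - \tilde{K}\|_2 \le \epsilon$ yields the claimed bound $\|K^{-1} - \tilde{K}^{-1}\|_2 \le \epsilon \|K^{-1}\|_2 \|\tilde{K}^{-1}\|_2$.

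There is essentially no serious obstacle here; the only point requiring a little care is the choice of factorization in the identity. Writing the difference as $K^{-1}(\tilde{K}-K)\tilde{K}^{-1}$ (rather than the symmetric-looking but equivalent $\tilde{K}^{-1}(K-\tilde{K})K^{-1}$) ensures that the hypothesis on $\|K-\tilde{K}\|_2$ enters directly through the middle factor, while the two flanking inverses produce the product $\|K^{-1}\|_2\|\tilde{K}^{-1}\|_2$ appearing in the stated bound. No symmetry or normality of the matrices is needed, despite the lemma's name, so the result holds for arbitrary invertible $K,\tilde{K}$.
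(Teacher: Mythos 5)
Your proof is correct and follows essentially the same route as the paper's: the identity $K^{-1}-\tilde{K}^{-1}=K^{-1}(\tilde{K}-K)\tilde{K}^{-1}$ you use is exactly the (Woodbury-type) identity the paper invokes, followed by the same application of submultiplicativity of the spectral norm.
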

\begin{proof}[Proof of Lemma~\ref{lemma:symmetric matrix difference bound}]
    Define $\bar{K} : = K - \tilde{K}$. Applying the Woodbury matrix equality, we have $\tilde{K}^{-1} = K^{-1} + K^{-1} \cdot \bar{K} \cdot \tilde{K}^{-1}$, and this implies $\|\tilde{K}^{-1} - K^{-1}\| \le  \epsilon \|K^{-1}\|\cdot \|\tilde{K}^{-1}\|$.
\end{proof}

\begin{figure}[t!]
\centering
    \begin{minipage}[t]{.32\textwidth}
        \centering
        \includegraphics[width=\textwidth, trim = 20mm 20mm 10mm 30mm]{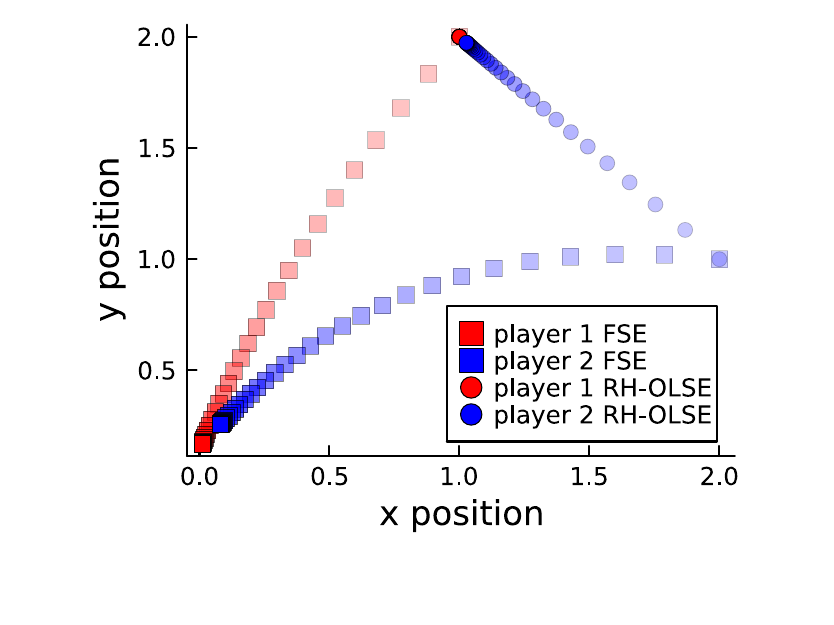}
        \subcaption{Trajectories}\label{fig:OL_FB_traj}\vspace{-2em}
    \end{minipage}
    \begin{minipage}[t]{.32\textwidth}
        \centering
        \includegraphics[width=\textwidth, trim = 10mm 12mm 10mm 0mm]{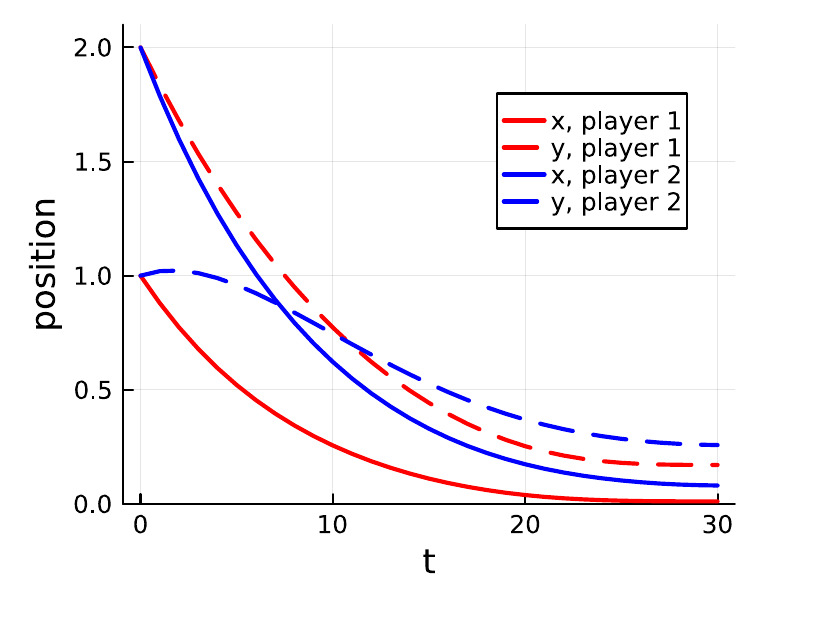}
        \subcaption{\centering Feedback Stackelberg policy}\label{fig:OL}\vspace{-2em}
    \end{minipage}
    \begin{minipage}[t]{.32\textwidth}
        \centering
        \includegraphics[width=\textwidth, trim = 10mm 12mm 10mm 0mm]{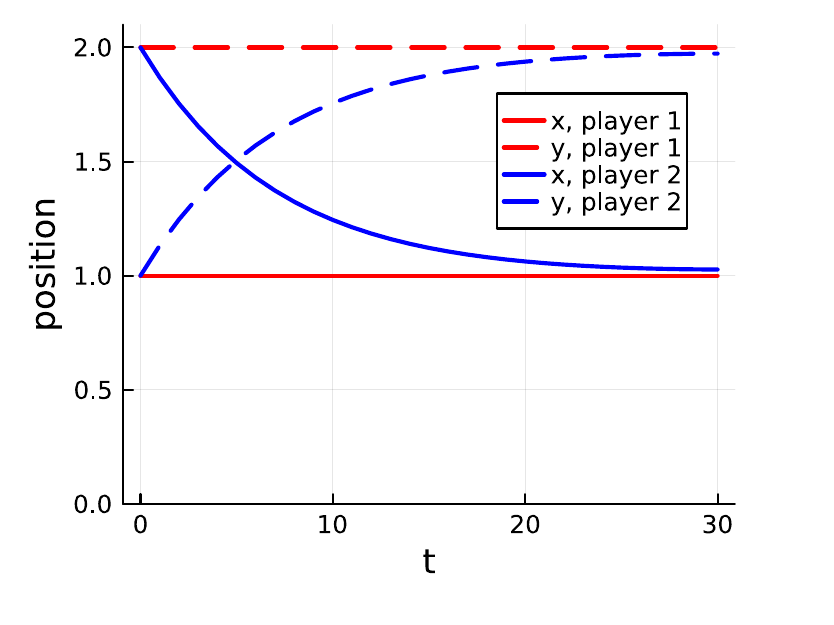}
        \subcaption{\centering Receding-horizon open-loop Stackelberg policy}\label{fig:FB}
    \end{minipage}  
    \caption{The trajectories under the receding horizon open-loop Stackelberg equilibrium (RH-OLSE) policy and those under the FSE policy are quite different, regardless of the initial conditions. 
    For example, in the above case, under the FSE policy, player 1 first moves towards the origin and then player 2 follows. However, under the RH-OLSE policy, player 1 always stays at its initial position, waiting for player 2 to~approach.}\label{fig:OL_FB}
\end{figure}
\begin{figure}[t!]
\centering
    \begin{minipage}[t]{.32\textwidth}
        \centering
        \includegraphics[width=\textwidth, trim = 5mm 75mm 5mm 70mm]{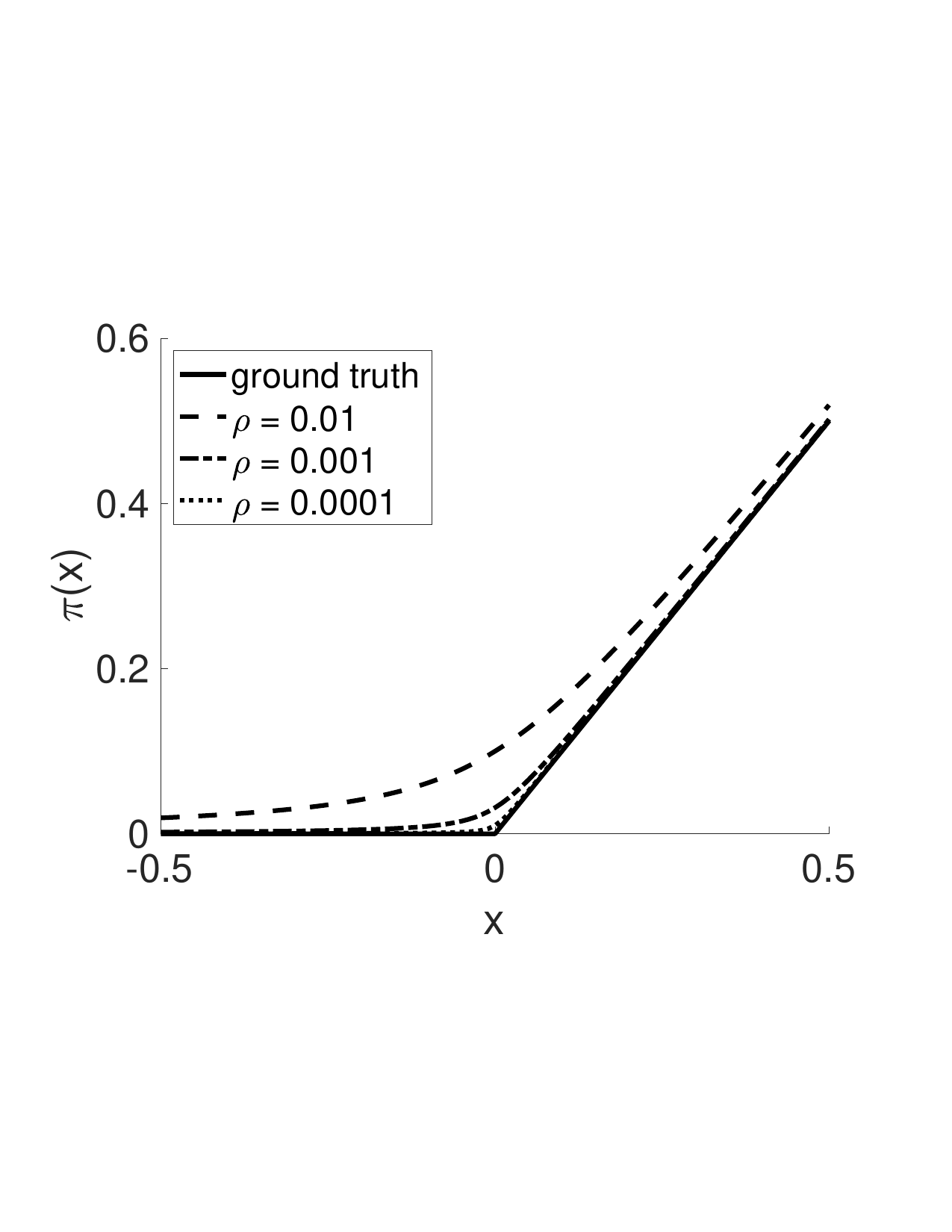}
        \subcaption{Zeroth order}\label{fig:policy gradient 0}
    \end{minipage}
    \begin{minipage}[t]{.32\textwidth}
        \centering
        \includegraphics[width=\textwidth, trim = 5mm 75mm 5mm 70mm]{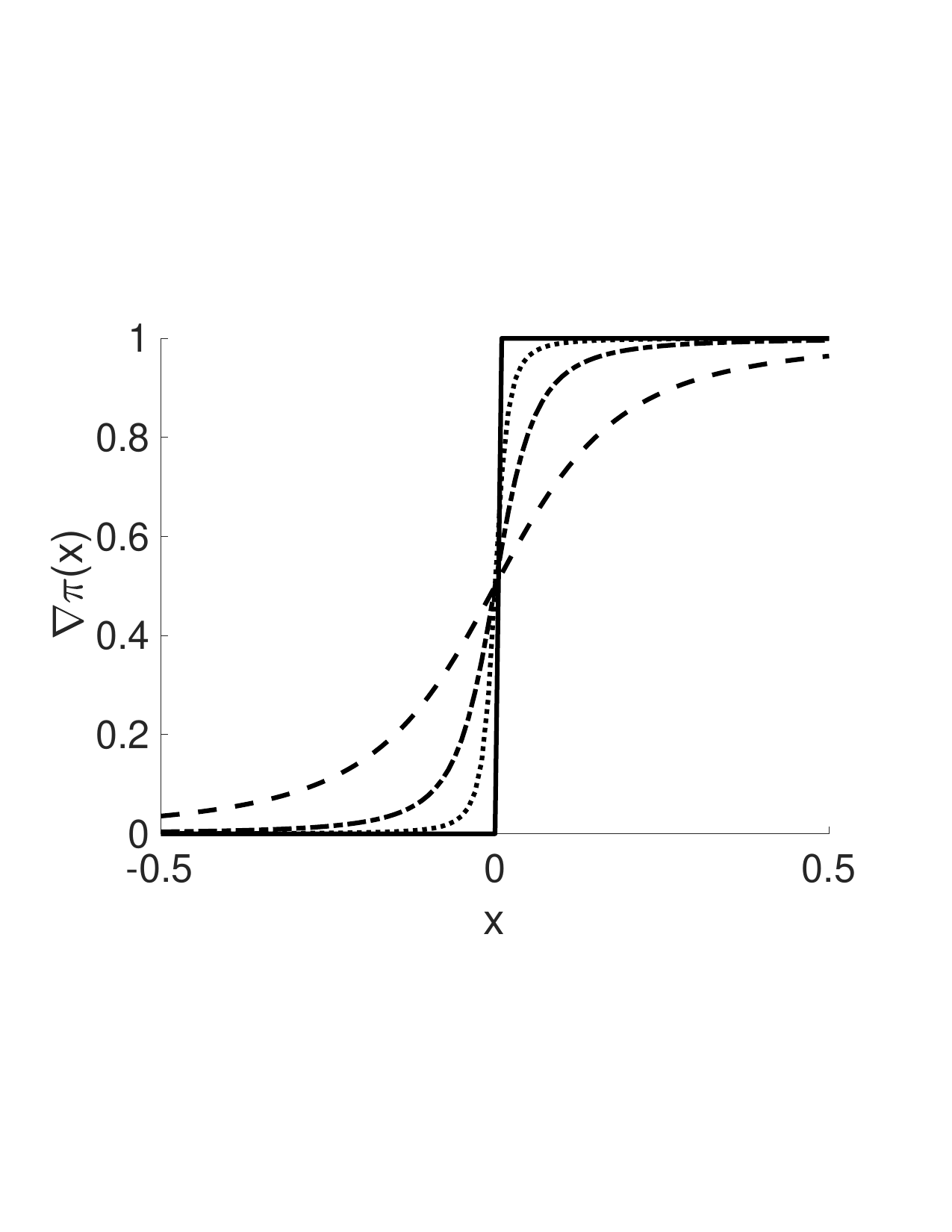}
        \subcaption{First order}\label{fig:policy gradient 1}\vspace{-1.8em}
    \end{minipage}  
    \begin{minipage}[t]{.32\textwidth}
        \centering
        \includegraphics[width=\textwidth, trim = 5mm 75mm 5mm 70mm]{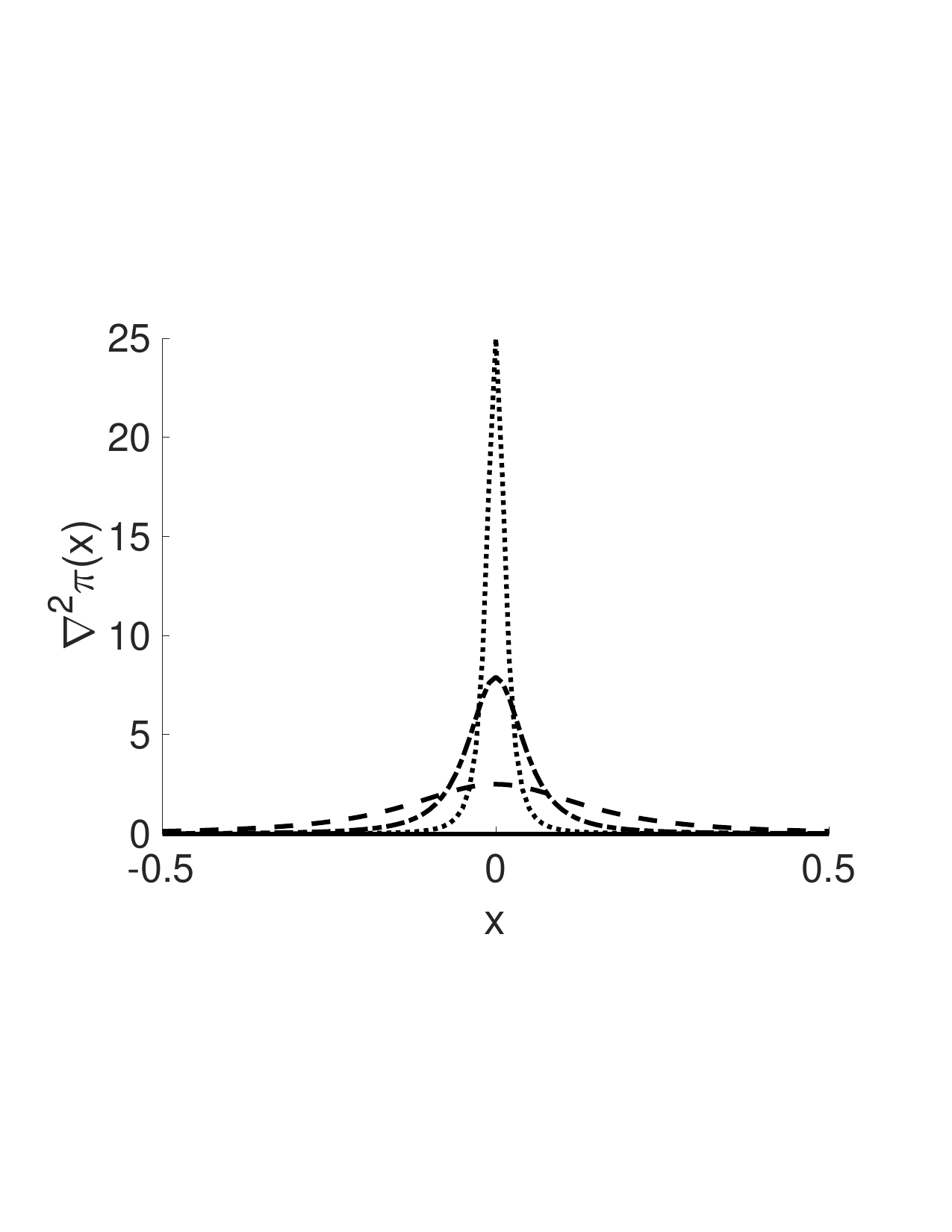}
        \subcaption{Second order}\label{fig:policy gradient 2}\vspace{-1.8em}
    \end{minipage}  
    \caption{Visualization of the policy gradients of a constrained single-stage Linear Quadratic Regulator problem under different values of $\rho$. The cost is given by $(u_0-x_0)^2$. The dynamics is defined as $x_1 = x_0+u_0$. We consider a constraint $u_0\ge 0$. The ground truth piecewise linear policy is not differentiable at $x=0$. As $\rho\to 0$, the policy obtained from PDIP and its first-order gradient closely approximate the ground truth policy and its first-order gradient, for all nonzero $x$. As shown in Figure~\ref{fig:policy gradient 2}, the high-order gradient of the PDIP policy decays to zero as $\rho \to 0$, for all nonzero $x$. 
    }\label{fig:policy gradient}\vspace{-2em}
\end{figure}

\begin{proof}[Proof of Theorem~\ref{thm:convergence of nonlinear games}]
    Observe that the first order-approximation of the KKT conditions for the local LQ game approximations coincides with the one for nonlinear games. By Theorem~\ref{thm:PDIP convergence}, for each $\rho>0$, $\lim_{k\to \infty} \|\Krho(\zvecrho^{(k)})\|_2=0$, and we have exponential convergence when $k\ge \|\Krho(\zvecrho^{(0)})\|_2/ (\frac{1- D \delta
        }{ D^2C \alphaupper})$. Moreover, by Theorem~\ref{thm:sufficient condition}, the solution $\lim_{\rho\to 0}\zvecrho^*$ recovers a local FSE trajectory. 
\end{proof}


{
\subsection{The difference between Stackelberg equilibrium and Nash equilibrium in oligopoly games \cite{varian2014intermediate}}\label{subsec:appendix comparing FSE and FNE}
Consider a two-player Oligopoly game. We denote by the action $u^i$ the amount of production of player $i$. Consider three positive constants $C_1$, $C_2$, and $C_3$, where $C_1 > C_3$. The cost of each player $i\in\{1,2\}$ is modeled as 
\begin{equation}
     \ell^i (u^1, u^2) := -u^i\cdot (C_1 - C_2(u^1 + u^2) - C_3)
 \end{equation}
Both players aim to minimize their respective costs. We compute the Nash equilibrium by solving the KKT conditions
\begin{equation}\label{eq:KKT conditions of the oligopoly game}
    \begin{aligned}
        \frac{\partial}{\partial u^{1*}} \ell^1(u^{1*}, u^{2*}) & = -C_1 + 2 C_2u^{1*} + C_2 u^{2*} + C_3 \\
        \frac{\partial}{\partial u^{2*}} \ell^2(u^{1*}, u^{2*}) & = -C_1 + 2 C_2u^{2*} + C_2 u^{1*} + C_3 
    \end{aligned}
\end{equation}
The Nash equilibrium is $u_{Nash}^{1*} = u_{Nash}^{2*} = \frac{C_1-C_3}{3 C_2}$. In what follows, we compute the Stackelberg equilibrium. By solving player 2's KKT condition, as derived in the second line of \eqref{eq:KKT conditions of the oligopoly game}, the optimal reaction control of player 2 is given by
\begin{equation}
    u^{2*} = \frac{C_1-C_3}{2C_2} - \frac{1}{2} u^{1*}
\end{equation}
Substituting this into player $1$'s decision problem, we have 
\begin{equation}
\begin{aligned}
    \min_{u^1}\  & \ell^1(u^1,u^2) = -u^1\cdot (C_1 - C_2(u^1 + u^2) - C_3) \\
    \textrm{s.t. } & u^2 = \frac{C_1-C_3}{2C_2} - \frac{1}{2} u^{1}
\end{aligned}
\end{equation}
Solving the above problem, we have that the optimal action of player 1 is $u_{Stackelberg}^{1*}=\frac{C_1 - C_3}{2C_2}$, and the optimal action of player 2 is $u_{Stackelberg}^{2*} = \frac{C_1-C_3}{4C_2}$. The Stackelberg equilibrium can be arbitrarily different from the Nash equilibrium when the value $\frac{C_1 - C_3}{C_2}$ varies. 
}

\subsection{A counter example that the receding horizon open-loop Stackelberg equilibrium fails to approximate the FSE well}\label{subsec: appendix RHOL fails to approximate FSE}
We consider Example 1 from \cite{li2023cost}. We show in Figure~\ref{fig:OL_FB_traj} that the receding-horizon open-loop Stackelberg policy could lead to a trajectory quite different from the one under FSE. Therefore, it is crucial to study the computation~of~FSE.

\subsection{The decay of high-order policy gradients when we apply PDIP to solve constrained LQ games
}\label{subsec:appendix quasi policy gradient}
We validate the \quasi{} assumption in LQ games in Proposition~\ref{prop:quasi gradient zero}, and include a simplified example in Figure~\ref{fig:policy gradient}. \vspace{-.5em}

\begin{proposition}\label{prop:quasi gradient zero}
    Under the same assumptions of Theorem~\ref{thm:PDIP convergence}, let $\rho>0$ and denote by $\zvecrhostar$ a converged solution to an LQ game under Algorithm~\ref{alg:pdip LQ} with considering high-order policy gradients. Let $\{\pitrhoi\}_{t=0,i=1}^{T,N}$ be the converged policies. 
    Suppose that $\lim_{\rho\to0}\zvecrhostar$ exists and we denote it by $\zvec^*$. Moreover, suppose that the ground truth FSE policies $\{\pi_t^i\}_{t=0,i=1}^{T,N}$ are differentiable at $(\xvec^*, \uvec^*)$. Then, $\lim_{\rho\to 0} \| \nabla \pitrhoi - \nabla \pi_t^i\|_2 = 0$, and $\lim_{\rho\to 0}\|\nabla^j\pitrhoi\|_2=0,\forall i\in\mathbf{I}_1^N,t\in\mathbf{I}_0^T$, $j\ge2$.
\end{proposition}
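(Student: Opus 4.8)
The plan is to analyze, for each fixed $\rho>0$, the policy $\pitrhoi$ as the smooth implicit function $\theta\mapsto u_t^i$ defined by the relaxed KKT system $0=\Ktirho(\cdot\,;\theta)$, where $\theta:=(x_t,u_t^{1:i-1})$ is the policy's argument, and to track how its derivatives behave as $\rho\to0$. The single structural fact driving everything is that, in an LQ game, every row of $\Ktirho$ is affine in the primal--dual variables except (i) the relaxed complementarity rows $\gamma_\tau^j\odot s_\tau^j-\rho\mathbf 1$, which are bilinear, and (ii) the coupling rows carrying the gradients $\nabla\tilde\pi^{j}$ of subsequent players' policies. I would run a nested induction in exactly the construction order of Section~\ref{sec:LQ games}: reverse player order $i=N,\dots,1$ and backward time $t=T,\dots,0$. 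The base case is player $N$ at stage $T$, whose map $\KTNrho$ contains no subsequent policies and is therefore genuinely affine-except-complementarity.

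For the first-order claim, I would first argue that the active set freezes near $\zvec^*$. The assumed differentiability of the piecewise-affine ground-truth policy $\pi_t^i$ at $(\xvec^*,\uvec^*)$ places that point in the relative interior of a single active-set region, and, combined with the assumed central-path limit $\zvecrhostar\to\zvec^*$, lets me invoke strict complementarity at $\zvec^*$: each inequality is either active ($s_k\to0$, $\gamma_k\to\gamma_k^*>0$) or inactive ($\gamma_k\to0$, $s_k\to s_k^*>0$), and this dichotomy is stable for small $\rho$. Consequently $\nabla\Ktirho$ converges to the Jacobian of the equality-constrained system (active inequalities promoted to equalities, inactive ones dropped), which is invertible and uniformly bounded by \eqref{eq:convergence pseudo inverse of K}. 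Feeding this into the policy-gradient formula \eqref{eq:T,N,policy gradient} and \eqref{eq:Delta y T i}, and using the induction hypothesis that the subsequent gradients $\nabla\tilde\pi^{j}$ already converge, yields $\nabla\pitrhoi\to\nabla\pi_t^i$.

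For the higher-order claim, I would differentiate the identity $0=\Ktirho(\yvec(\theta);\theta)$ repeatedly in $\theta$. Because the map is affine-except-complementarity (the curvature terms $\psi\,\nabla^2\tilde\pi^{j}$ vanish in the limit by the induction hypothesis), all $\yvec$-derivatives of $\Ktirho$ of order $\ge3$ vanish, and the only second-order contribution is $\partial^2(\gamma_k s_k)/\partial\gamma_k\,\partial s_k=1$. Hence $\partial_\theta^j\yvec$ solves $\nabla\Ktirho\cdot\partial_\theta^j\yvec=-\Phi_j$, where the forcing $\Phi_j$ is a sum of Leibniz products $\gamma_k^{(l)}s_k^{(j-l)}$ with $1\le l\le j-1$. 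The complementarity relation $\gamma_k s_k=\rho$ then does the work: for active $k$, $s_k=\rho/\gamma_k$ gives $\partial_\theta^m s_k=O(\rho)$ for every $m$, and dually $\partial_\theta^m\gamma_k=O(\rho)$ for inactive $k$. Every product in $\Phi_j$ therefore contains a factor that is $O(\rho)$, so $\Phi_j\to0$; with $\|(\nabla\Ktirho)^{-1}\|$ bounded, $\partial_\theta^j\yvec\to0$, and in particular $\nabla^j\pitrhoi\to0$ for $j\ge2$. This vanishing curvature is also precisely what advances the induction to the next player/stage.

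The main obstacle is making this cascade of limits uniform. I would need a strong induction on the derivative order $m$ to establish that $\partial_\theta^m\yvec$, $\partial_\theta^m\gamma_k$, and $\partial_\theta^m s_k$ all stay bounded uniformly in $\rho$ (and that the ``small'' component's derivatives are genuinely $O(\rho)$, not merely $o(1)$); this boundedness is exactly what lets me conclude $\Phi_j\to0$ rather than merely being formally small. The second delicate point is the strict-complementarity claim itself: differentiability of the ground-truth policy and the existence of the central-path limit must be combined carefully to rule out degeneracy at $\zvec^*$, since without strict complementarity a component could have both $\gamma_k\to0$ and $s_k\to0$, and the $O(\rho)$ decay of one factor in $\Phi_j$ would then fail.
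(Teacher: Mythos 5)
Your proposal is sound and shares the paper's overall scaffolding (backward induction over $t=T,\dots,0$ and reverse player order $i=N,\dots,1$, with player $N$ at stage $T$ as the base case, and the vanishing of subsequent players' curvature terms $\psi\,\nabla^2\pi^{j}_{\cdot,\rho}$ advancing the induction), but the mechanism you use for the higher-order claim is genuinely different. The paper argues softly: it observes that $\nabla^j\pi_{T,\rho}^N$ is obtained by composing a linear map with the (infinitely differentiable) matrix-inverse operator applied at $\mathbf{z}_{T,\rho}^{N*}$, so the map $\mathbf{z}\mapsto\nabla^j\pi_{T,\rho}^N$ is continuous, hence uniformly continuous on a compact neighborhood where $\nabla K_{T,\rho}^N$ stays invertible; pointwise convergence $\mathbf{z}_{T,\rho}^{N*}\to\mathbf{z}_T^{N*}$ then transfers to the gradients, and the limit is zero for $j\ge2$ because the ground-truth policy is piecewise linear at a point of differentiability. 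You instead differentiate the implicit relation $0=K_{t,\rho}^i$ repeatedly and isolate the only source of curvature — the bilinear complementarity rows — showing via $\gamma_k s_k=\rho$ and a Leibniz expansion that every forcing term in the linear system for $\partial_\theta^j\mathbf{y}$ carries an $O(\rho)$ factor (derivatives of $s_k$ for active constraints, of $\gamma_k$ for inactive ones). Your route is more explicit about \emph{why} the curvature decays and at what rate, and it does not need to invoke the uniform-continuity-preserves-pointwise-convergence lemma the paper cites; the price is the strong induction on derivative order needed to keep all lower-order derivatives uniformly bounded in $\rho$, and the need to justify strict complementarity at $\mathbf{z}^*$ (which you correctly flag as the delicate point — the paper sidesteps it by assuming differentiability of the ground-truth policy and, implicitly, the uniform invertibility bound \eqref{eq:convergence pseudo inverse of K}, which rules out the degenerate case $\gamma_k^*=s_k^*=0$). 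Both arguments reach the same conclusion; yours is the more self-contained and quantitative of the two.
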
\vspace{-0.6em}
\begin{proof}
    At time $t=T$, there is no policy gradient term in the $N$-th player's KKT conditions. Recall that $\nabla \piTrhoN=-[(\nabla K_{T,\rho}^N)^{-1}]_{u_T^N} \nabla_{[x_T,u_T^{1:N-1}]}K_{T,\rho}^N$ and $\nabla \pi_T^N=-[(\nabla K_{T}^N)^{-1}]_{u_T^N} \nabla_{[x_T,u_T^{1:N-1}]}K_{T}^N$. Since $\lim_{\rho\to0}\|K_{T,\rho}^N(\zTrhoNstar) - K_T^N(\zTNstar)\|_2=0$, we have pointwise convergence $\lim_{\rho\to 0}\|\nabla \piTrhoN - \nabla \pi_T^N\|_2=0$ almost everywhere. We characterize those high-order \quasigradients{} of $\piTrhoN$ as follows. We denote the map from $\zTrhoNstar$ to the $j$-th order gradient of $\piTrhoN$ by an operator $\operator_T^{N,j}:\zTrhoNstar \to \nabla^j \piTrhoN$. Observe that $[(\nabla K_{T,\rho}^N)^{-1}]_{u_T^N}$ can be considered as the concatenation of a matrix inverse operator $\matrixinverseoperator: X\in\mathbb{R}^{n\times n}\to X^{-1}\in\mathbb{R}^{n\times n}$ and a linear operator $\linearoperator:\zTrhoNstar \to \nabla K_{T,\rho}^N$. Note that the matrix inverse is an infinitely differentiable operator when $X$ is invertible and $\nabla_{[x_T,u_T^{1:N-1}]}K_{T,\rho}^N$ is a constant matrix. Thus, by the chain rule \cite{rudin1976principles}, $\piTrhoN$ is infinitely differentiable, which also implies that $\nabla^j \piTrhoN$ is continuous, $\forall j\ge 1$. 
    
    Since $\nabla K_{T,\rho}^N(\zTrhoNstar)$ is invertible at $\zTrhoNstar$ and $\operator_T^{N,j}$, $\forall j\ge 1$, is a continuous operator, there exists a compact set $\compactdomainforgradientanalysis$ containing $\zTrhoNstar$ such that $\nabla K_{T,\rho}^N(\zvec_T^N)$ is invertible for all $\zvec_T^N\in \compactdomainforgradientanalysis$. 
    By the compactness of $\compactdomainforgradientanalysis$ and the continuity of $\operator_T^{N,j}$, we have that $\operator_T^{N,j}$ is a uniformly continuous operator on $\compactdomainforgradientanalysis$. By Theorem 2 in \cite{bartle1961preservation}, a uniformly continuous operator preserves the pointwise convergence. Thus, $\lim_{\rho\to 0}  \|\nabla^j\piTrhoN - \nabla^j \pi_T^N\|_2=0$. Since the ground truth policy $\pi_T^N$ is piecewise linear and the high-order gradients of $ \pi_T^N$ vanish
    , we have $\lim_{\rho\to 0}\|\nabla^j \piTrhoN\|_2=0$, $\forall j>1$. 
    
    Subsequently, for player $i=N-1$, since $\lim_{\rho\to 0} \|\nabla \piTrhoN - \nabla \pi_T^N\|_2=0$, we have $\lim_{\rho\to 0} \|\nabla K_T^{i}(\zTrhoistar) - \nabla K_T^i(\zTi)\|_2=0$, which implies $\lim_{\rho\to 0}\|\nabla \piTrhoi - \nabla \pi_T^i\|_2 = 0$. A similar reasoning as above yields that $\lim_{\rho\to \infty} \|\nabla^j\piTrhoi - \nabla^j \pi_T^i\|_2 = 0$, $\forall j>1$. Moreover, we can show that for all players $i<N-1$, $\lim_{\rho\to 0}\|\nabla^j \piTrhoi - \nabla^j \pi_T^i\|_2=0$, $\forall j\ge 1$. We continue this backward induction proof of $\lim_{\rho\to0} \|\nabla^j \pitrhoi - \nabla^j \pi_t^i\|_2=0$, $\forall j\ge 1$, for prior stages backwards in players decision order until $t=0$ and $i=1$. 
\end{proof}\vspace{-1em}

\section{KKT conditions for two-player LQ games}\label{appendix:KKT}
The KKT conditions $0=K_{T,\rho}^2(\zvec_T^2)$  of player $2$ at time $T$ are \vspace{-0.7em}
\begin{equation*} 
    \left\{\begin{aligned}
        0=&\Sigma_{j=1}^2 R_T^{2,2,j} u_T^j + S_{T}^{2,2} x_T  + r_T^{2,2} + B_T^{2\top}\lambda_T^{2} - G_{u_T^2}^{2\top} \gamma_T^{2} -  H_{u_T^2}^{2\top} \mu_T^{2}   \\
        0=& Q_{T+1}^2 x_{T+1} + q_{T+1}^2 -\lambda_T^2  -  G_{x_{T+1}}^{2\top} \gamma_{T+1}^2 - H_{x_{T+1}}^{2\top} \mu_{T+1}^2  \\
        0=& x_{T+1} -A_T x_T -B_T^1 u_T^1 -B_T^2 u_T^2 - c_T \\
        0=& H_{u_T^2}^{2} u_T^2 + H_{x_{T}}^2 x_{T} + H_{u_T^1}^2 u_T^1 + \bar{h}_T^2\\
        0=& H_{x_{T+1}}^2 x_{T+1} + \bar{h}_{T+1}^2\\
        0=&\gamma_{T:T+1}^2 \odot s_{T:T+1}^2  - \rho \mathbf{1}\\
        0=& G_{u_T^2}^2 u_T^2  +G_{x_T}^2 x_T + G_{u_T^1}^2 u_T^1 + \bar{g}_{T}^2- s_T^2\\
        0=& G_{x_{T+1}}^2 x_{T+1} + \bar{g}_{T+1}^2 - s_{T+1}^2 
    \end{aligned}\right.
\end{equation*}
We construct the KKT conditions $0=K_{T,\rho}^1(\zvec_T^1)$ of player $1$ at time $T$: 
\begin{equation*} 
    \left\{\begin{aligned}
        0=& \Sigma_{j=1}^2 R_T^{1,1,j} u_T^j + S_T^{1,1} x_T + r_T^{1,1} + B_T^{1\top}\lambda_T^1 - G_{u_T^1}^{1\top} \gamma_T^1 - H_{u_T^1}^{1\top} \mu_T^1 + (\nabla_{u_T^1} \pi_{T,\rho}^2)^\top\psi_T^{1,2} \\
        0=& Q_{T+1}^1 x_{T+1} + q_{T+1}^1 - \lambda_T^1 - G_{x_{T+1}}^{1\top} \gamma_{T+1}^1 - H_{x_{T+1}}^{1\top} \mu_{T+1}^1 \\ 
        0=& \Sigma_{j=1}^2 R_T^{1,2,j}u_T^j  + S_T^{1,2}x_T  +r_T^{1,2}  + B_T^{2\top} \lambda_T^1  - G_{u_T^2}^{1\top} \gamma_{T}^1    - H_{u_T^2}^{1\top} \mu_{T}^1 - \psi_{T}^{1,2} \\
        0=& H_{u_T^1}^1 u_T^1 + H_{x_T}^1 x_T + H_{u_T^2}^1 u_T^2 + \bar{h}_{T}^1 \\ 
        0=& H_{x_{T+1}}^1 x_{T+1} + \bar{h}_{T+1}^1 \\ 
        0=& \gamma_{T:T+1}^1\odot s_{T:T+1}^1 - \rho \mathbf{1}\\
        0=& G_{u_T^1}^{1} u_T^1 + G_{x_T}^1 x_T + G_{u_T^2}^1 u_T^2 + \bar{g}_{T}^1 - s_T^1\\
        0=& G_{x_{T+1}}^1 x_{T+1} + \bar{g}_{T+1}^1 - s_{T+1}^1 \\
        0=& K_{T,\rho}^2(\zvec_T^2)
    \end{aligned}\right.
\end{equation*}
We construct the KKT conditions $0=K_{t,\rho}^2(\zvec_t^2)$ of player $2$ at time $t<T$: 
\begin{equation*} 
    \left\{\begin{aligned}
        0= & \Sigma_{j=1}^2 R_t^{2,2,j} u_t^j + S_t^{2,2}x_t + r_t^{2,2} + B_t^{2\top} \lambda_t^2 - G_{u_t^2}^{2\top} \gamma_t^2 - H_{u_t^2}^{2\top} \mu_t^2 \\
        0=& Q_{t+1}^2 x_{t+1} + q_{t+1}^2 - \lambda_{t}^2 -G_{x_{t+1}}^{2\top} \gamma_{t+1}^2 - H_{x_{t+1}}^{2\top}\mu_{t+1}^2 \\ &- A_{t+1}^\top\lambda_{t+1}^{2} + \Sigma_{j=1}^2 S_{t+1}^{2,j}u_{t+1}^j + (\nabla_{x_{t+1}} \pi_{t+1,\rho}^1)^\top \eta_t^{2,1} \\
        0=&  \Sigma_{j=1}^2 R_{t+1}^{2,1,j} u_{t+1}^j + S_{t+1}^{2,1} x_{t+1}+ r_{t+1}^{2,1} + B_{t+1}^{1\top}\lambda_{t+1}^2 - G_{u_{t+1}^1}^{2\top} \gamma_{t+1}^2 - H_{u_{t+1}^1}^{2\top} \mu_{t+1}^2   -\eta_t^{2,1}\\
        0=& x_{t+1} - A_t x_t - B_t^{1} u_t^1- B_t^2 u_t^2 - c_t \\
        0=& H_{u_t^2}^2 u_t^2 + H_{x_t}^2 x_t+ H_{u_t^1}^2 u_t^1 +\bar{h}_t^2  \\
        0=& \gamma_t^2 \odot s_t^2 - \rho\mathbf{1} \\
        0=& G_{u_t^2}^2 u_t^2 + G_{x_t}^2 x_t + G_{u_t^1}^2 u_t^1 + \bar{g}_t^2 - s_t^2 \\
        0=& K_{t+1,\rho}^1(\zvec_{t+1}^1)
    \end{aligned}\right.
\end{equation*}
We construct the KKT conditions $0=K_{t,\rho}^1(\zvec_t^1)$ of player $1$ at time $t<T$: 
\begin{equation*} 
    \left\{\begin{aligned}
        0=& \Sigma_{j=1}^2 R_t^{1,1,j} u_t^j  + S_t^{1,1}x_t + r_t^{1,1} + B_t^{1\top} \lambda_t^1 - G_{u_t^1}^{1\top} \gamma_t^1 - H_{u_t^1}^{1\top} \mu_t^1 + (\nabla_{u_t^1} \pi_{t,\rho}^2)^\top \psi_t^{1,2} \\
        0=& Q_{t+1}^1 x_{t+1} + q_{t+1}^1 - \lambda_t^1 - G_{x_{t+1}}^{1\top} \gamma_{t+1}^1 - H_{x_{t+1}}^{1\top} \mu_{t+1}^1 \\
        & - A_{t+1}^\top\lambda_{t+1}^1 + \Sigma_{j=1}^2 S_{t+1}^{1,j}u_{t+1}^j + (\nabla_{x_{t+1}} \pi_{t+1,\rho}^2)^\top \eta_t^{1,2} \\
        0=& \Sigma_{j=1}^2 R_t^{1,2,j}u_t^j + S_{t}^{1,2} x_t + r_t^{1,2} + B_t^{2\top} \lambda_t^1 - G_{u_t^2}^{1\top} \gamma_t^1 - H_{u_t^2}^{1\top} \mu_t^1 - \psi_t^{1,2} \\
        0=& \Sigma_{j=1}^2 R_{t+1}^{1,2,j}u_t^j + S_{t+1}^{1,2}x_{t+1} + r_{t+1}^{1,2} + B_{t+1}^{2\top}\lambda_{t+1}^1 -G_{u_{t+1}^2}^{1\top} \gamma_{t+1}^1 - H_{u_{t+1}^2}^{1\top} \mu_{t+1}^1 - \eta_t^{1,2} \\
        0=& H_{u_t^1}^1 u_t^1 + H_{x_t}^1 x_t + H_{u_t^2} u_t^2 + \bar{h}_t^1  \\ 
        0=& \gamma_t^1 \odot s_t^1 - \rho \mathbf{1} \\ 
        0=& G_{u_t^1}^{1} u_t^1 + G_{x_t}^1 x_t + G_{u_t^2}^{1} u_t^2 +\bar{g}_t^1 - s_t^1\\
        0=& K_{t,\rho}^2(\zvec_t^2)
    \end{aligned}\right.
\end{equation*}
We continue the above construction process until $i=1$ and $t = 0$. 

\bibliographystyle{siamplain}
\bibliography{references.bib}
\end{document}


\maketitle

\section{A detailed example}

Here we include some equations and theorem-like environments to show
how these are labeled in a supplement and can be referenced from the
main text.
Consider the following equation:
\begin{equation}
  \label{eq:suppa}
  a^2 + b^2 = c^2.
\end{equation}
You can also reference equations such as \cref{eq:matrices,eq:bb} 
from the main article in this supplement.

\lipsum[100-101]

\begin{theorem}
An example theorem.
\end{theorem}

\lipsum[102]
 
\begin{lemma}
An example lemma.
\end{lemma}

\lipsum[103-105]

Here is an example citation: \cite{KoMa14}.

\section[Proof of Thm]{Proof of \cref{thm:bigthm}}
\label{sec:proof}

\lipsum[106-112]

\section{Additional experimental results}
\Cref{tab:smfoo} shows additional
supporting evidence. 

\begin{table}[htbp]
\footnotesize
  \caption{Example table.}\label{tab:smfoo}
\begin{center}
  \begin{tabular}{|c|c|c|} \hline
   Species & \bf Mean & \bf Std.~Dev. \\ \hline
    1 & 3.4 & 1.2 \\
    2 & 5.4 & 0.6 \\ \hline
  \end{tabular}
\end{center}
\end{table}

\bibliographystyle{siamplain}
\bibliography{references}